\newcommand{\Cauchy}{\mathcal{C}}
\newcommand{\A}{\mathbb{A}}
\newcommand{\cb}{\mathbf{c}}
\newcommand{\X}{\mathbf{X}}
\newcommand{\Y}{\mathbf{Y}}
\newcommand{\DFT}{\mathbb{F}}
\newcommand{\f}{\mathbf{f}}
\newcommand{\g}{\mathbf{g}}
\newcommand{\x}{\mathbf{x}}
\newcommand{\y}{\mathbf{y}}
\newcommand{\Vanderm}{\mathbb{V}}
\newcommand{\ii}{\mathfrak{i}}
\newcommand{\ee}{\mathbf{e}}
\newcommand{\roff}{{\boldsymbol\varepsilon}}
\newcommand{\bfalpha}{{\boldsymbol\alpha}}
\newcommand{\bfbeta}{{\boldsymbol\beta}}
\newcommand\restr[2]{{
		\left.\kern-\nulldelimiterspace 
		#1 
		\vphantom{\big|} 
		\right|_{#2} 
	}}
\newcommand{\mc}{\mathcal}
\newcommand{\mbf}{\bm} 			
\newcommand{\mbb}{\mathbb}  	
\newcommand{\N}{ \ensuremath{\mathbb{N}}}  
\newcommand{\C}{ \ensuremath{\mathbb{C}}}  
\newcommand{\eps}{\epsilon}
\providecommand\given{} 
\newcommand\SetSymbol[1][]{ \nonscript\,#1\vert \allowbreak \nonscript\,\mathopen{} }
\DeclarePairedDelimiterX\set[1]{\lbrace}{\rbrace}{ \renewcommand\given{\SetSymbol[\delimsize]} #1 }  
\DeclareMathOperator*{\argmin}{{arg\, min}}
\DeclarePairedDelimiterX\norm[1]{\lVert}{\rVert}{#1}  			
\DeclarePairedDelimiterX\inner[2]{\langle}{\rangle}{#1 \,,\, #2}  	
\DeclareMathOperator{\linspan}{span} 							
\DeclareMathOperator{\Ran}{Ran}											
\DeclarePairedDelimiterX\abs[1]{\lvert}{\rvert}{#1} 
\newcommand{\0}{\mathbf{0}}
\begin{document}
	

	\title[Vandermonde-Cauchy DMD]{Data driven Koopman spectral analysis in Vandermonde--Cauchy form via the DFT: numerical method and theoretical insights}
	
	
	\author[Drma\v{c}, Mezi\'{c} and Mohr]{Zlatko Drma\v{c}\affil{1}\comma\corrauth and Igor Mezi\'{c}\affil{2,3} and Ryan Mohr\affil{3}}
	\address{\affilnum{1}\ Faculty of Science, Department of Mathematics, University of Zagreb, 10000 Zagreb, Croatia.\\
	\affilnum{2}\ Department of Mechanical Engineering and Mathematics, University of California, Santa Barbara, CA
	93106, USA \\
	\affilnum{3}\ AIMdyn, Inc., Santa Barbara, CA 93101, USA}
	%
	%
	\emails{{\tt drmac@math.hr} (Z. Drma\v{c}), {\tt mezic@engr.ucsb.edu} (I. Mezi\'{c}), {\tt mohrr@aimdyn.com} (R. Mohr)}
	%
	
\begin{abstract}
The goals and contributions of this paper are twofold. It provides a new computational tool for data driven Koopman spectral analysis by taking up the formidable challenge to develop a numerically robust algorithm by following the natural formulation via the Krylov decomposition with the Frobenius companion matrix, and by using its eigenvectors explicitly -- these are defined as the inverse of the notoriously ill--conditioned  Vandermonde matrix. The key step to curb ill--conditioning is the discrete Fourier transform of the snapshots; in the new representation, the Vandermonde matrix is transformed into a generalized Cauchy matrix, which then allows accurate computation by specially tailored algorithms of numerical linear algebra. 
The second goal is to shed light on the connection between the formulas for optimal reconstruction weights when reconstructing snapshots using subsets of the computed Koopman modes. It is shown how using a certain weaker form of generalized inverses leads to explicit reconstruction formulas that match the abstract results from Koopman spectral theory, in particular the Generalized Laplace Analysis. 
\end{abstract}
	
	\keywords{Dynamic Mode Decomposition, Koopman operator, Krylov subspaces, proper orthogonal decomposition, Rayleigh-Ritz approximation, Vandermonde matrix, Discrete Fourier Transform, Cauchy matrix, Generalized Laplace Analysis}
	
	\ams{15A12, 15A23, 65F35, 65L05, 65M20, 65M22, 93A15, 93A30, 93B18, 93B40, 93B60, 93C05, 93C10, 93C15, 93C20, 93C57}
	
	\maketitle
	
\tableofcontents

\section{Introduction}
Dynamic Mode Decomposition is a data driven spectral analysis technique for a time series. For a sequence of snapshot vectors $\f_1,\f_2,\dots, \f_{m+1}$ in $\mathbb{C}^{n}$, assumed driven by a linear operator $\A$, $\f_{i+1}=\mbb A \f_i$, the goal is to represent the snapshots in terms of the computed eigenvectors and eigenvalues of $\A$. Such a representation of the data sequence provides an insight into the evolution of the underlying dynamics, in particular on dynamically relevant spatial structures (eigenvectors) and amplitudes and frequencies of their evolution (encoded in the corresponding eigenvalues) -- it can be considered a finite dimensional realization of the Koopman spectral analysis, corresponding to the Koopman operator associated with the dynamics under study \cite{arbabi-mezic-siam-2017}. This important theoretical connection with the Koopman operator and the ergodic theory, and the availability of numerical algorithm \cite{schmid2010} make the DMD a tool of trade in computational study of complex phenomena in fluid dynamics, see e.g. \cite{rowley2009spectral}, \cite{Williams2015}. A peculiarity of the data driven setting is that  direct access to the operator is not available, thus an approximate representation of $\mbb A$ is achieved using solely the snapshot vectors whose number $m$ is usually much smaller than the dimension $n$ of the domain of $\A$.

In this paper, we revisit the natural formulation of finite dimensional Koopman spectral analysis in terms of Krylov bases and the Frobenius companion matrix. In this formulation, reviewed in \S \ref{SS=Modal-appr-snapshots} below, a spectral approximation of $\A$ is obtained by the Rayleigh--Ritz extraction using the \emph{primitive} Krylov basis $\X_m=(\f_1,\ldots,\f_m)$. This means that the Rayleigh quotient of $\A$ is the Frobenius companion matrix, whose eigenvector matrix is the inverse of the Vandermonde matrix $\Vanderm_m$, parametrized by its eigenvalues $\lambda_i$. The DMD (Koopman) modes, that is, the Ritz vectors $z_i$, are then computed as $Z_m\equiv (z_1\;\ldots\;z_m)=\X_m\Vanderm_m^{-1}$. Then, from $\X_m = Z_m \Vanderm_m$ we readily have
$\f_i = \sum_{j=1}^m z_j \lambda_j^i$ for $i=1,\ldots,m$. 

Unfortunately, Vandermonde matrices can have extremely high conditions numbers, so a straight-forward implementation of this algebraically elegant scheme can lead to inaccurate results in finite precision computation. Most other DMD-variants bypass this issue by computing an orthonormal basis from the snaphshots using a truncated singular value decomposition \cite{schmid2010,Schmid2011,Tu-DMD-Theory-Appl}. We note that the original formulation of DMD was based on the SVD (\cite{schmid2010}, reviewed in Algorithm \ref{zd:ALG:DMD} in this paper), whereas the first connection between DMD and Koopman operator theory was formulated in terms of the companion matrix \cite{Rowley:2009ez}. In Rowley et al. \cite{Rowley:2009ez}, though, there was no consideration of the deep numerical issues related to working with Vandermonde matrices which has likely contributed to the prevalence of the SVD-based variants of DMD. There is, however, a certain intrinsic elegance in the decomposition of the snapshots in terms of the spectral structure of the companion matrix in addition to a stronger connection to Generalized Laplace Analysis (GLA) theory \cite{Mohr:2014wm,Mezic:2013ei} of Koopman operator theory than the SVD-based DMD variants have.

Following this natural formulation, we present a DMD algorithm capable of numerically robust computing without resorting to the singular value decomposition of $\X_m$. We do this by leveraging high accuracy numerical linear algebra techniques to curb the potential ill-conditioning of the Vandermonde matrix. The key step is to transform the snapshot matrix by the discrete Fourier transform (DFT) -- this induces a similarity of the companion matrix, and transforms its eigenvector matrix into the inverse of a generalized Cauchy matrix,  which then allows accurate computation by specially tailored algorithms. This will be achieved using the techniques introduced in \cite{dgesvd-99}, \cite{Demmel-99-AccurateSVD}. We present the details of our formulation in section \ref{zd:SS=DFT+Cauchy-solver}. Numerical example presented in section \ref{SS:reconstruct:Num-Example}, where the spectral condition number of $\Vanderm_m$ is above $10^{70}$, illustrates the potential of the proposed method.

While the full collection of DMD modes and eigenvalues gives insight into the intrinsic physics, in many applications a reduced-order model is required. Thus reconstructions of the snapshots using a subset of the modes (selected using some criteria that we do not address here) is desired. We take up the subject of optimal reconstruction of the snapshots in section \ref{S=Snapshot-rec-theory}. The problem of computing optimal reconstruction weights is formulated in terms of reflexive g-inverses (see \cite{rao-mitra-1972}), which are somewhat weaker than the more well-known Moore-Penrose pseudo-inverse. It is this set of results that is closely linked with Generalized Laplace Analysis (GLA). We compare formulas for the optimal reconstruction weights, derived in section \ref{S=Snapshot-rec-theory}, with reconstruction formulas given by GLA theory in section \ref{S=GLA-connection}. Whereas the GLA reconstruction formulas have the form of a (finite) ergodic average, the optimal reconstruction formulas can be formulated as a non-uniformly weighted (finite) ergodic average which reduces to the GLA version for unitary spectrum. It is also interesting to note that while the reconstruction formulas from section \ref{S=Snapshot-rec-theory} optimally reconstruct (by definition) any finite set of snapshots, they are not asymptotically consistent in the sense that they do not recover the correct expansion of the observable in terms of eigenvectors in the limit of infinite data. On the other hand, while the GLA reconstruction formulas are sub-optimal (for general operators possessing non-unitary spectrum) for any finite set of snapshots, they are asymptotically consistent, recovering the correct expansion in the limit. We also show that the least squares problem, solved via the g-reflexive inverses defined on a tensor product of coefficient spaces, is equivalent to an eigenvector-adapted hermitian form in state space.

\section{Preliminaries}\label{SS=Modal-appr-snapshots}
In this section, we set the scene and review results relevant for the later development. 
We assume that the data vectors $\set{\f_1,\f_2,\dots, \f_{m+1}} \subset \C^n$ are generated by a matrix $\A : \C^n \to \C^n$ via $\f_{i+1} = \A\f_i$ for some given initial $\f_1 \in \C^n$. As a technical simplification, a generic case is assumed, i.e. that $\A$ is diagonalizable, and that its Ritz values with respect to the subspaces spanned by the snapshots are simple. We can think of the snapshots $\f_i$ as being generated by e.g. black--boxed numerical software $\A$ for solving a partial differential equation with initial condition $\f_1$, or e.g. as vectorized images of flames in combustion chamber, taken by a high-speed camera.
%
%
The goal is to represent the snapshots $\f_i$ of the underlying dynamics using the (approximate) eigenvectors and eigenvalues of $\A$.

\subsection{Spectral analysis using Krylov basis}\label{SS=Modal-appr-snapshots}
The data driven framework leaves little room to maneuver; hence we resort to the classical Rayleigh-Ritz approximations from the Krylov subspaces spanned by the snapshots.  Let $\X_m$ be the column partitioned matrix $\X_m = ( \f_1 ~ \ldots ~ \f_m )$. The columns of $\X_m$ span the Krylov subpsace $\mc K_m = \linspan\set{\f_1,\f_2,\dots, \f_{m}}$.  Without loss of generality, we assume that $\X_m$ is of full rank. The action of $\A$ on $\mc{K}_m$ can be represented as

\begin{equation}\label{zd:eq:AK=KC+R}
\A \X_m = \X_m C_m + E_{m+1},\;\;  C_m = \begin{pmatrix} 0 & 0 & \ldots & 0 & c_1 \cr
1 & 0 & \ldots & 0 & c_2 \cr 
0 & 1 & \ldots & 0 & c_3 \cr
\vdots & \ddots & \ddots & \vdots & \vdots \cr
0 & 0 & \ldots & 1 & c_{m}\end{pmatrix},\;\;E_{m+1} = \mbf r_{m+1} \mbf e_m^T ,\;\; \mbf e_m=\begin{pmatrix} 0 \cr 0  \cr \vdots \cr 0 \cr 1\end{pmatrix} ,
\end{equation}
where the $c_i$'s are the $\X_m$-basis coefficients of the orthogonal projection of $\f_{m+1}$ onto $\mc K_m$, and, with $\mathbf{c}=(c_i)_{i=1}^m$, $\mbf r_{m+1}=\f_{m+1}-\X_m \mathbf{c}$ is orthogonal to $\mc K_{m}$;  $\X_m^* \mbf r_{m+1}=0$. Hence $C_m = (\X_m^* \X_m)^{-1}(\X_m^* \A \X_m)\equiv \X_m^{\dagger}\A\X_m$, where $\X_m^{\dagger}$ is the Moore--Penrose generalized inverse of $\X_m$. Thus, the Frobenius companion matrix $C_m$ is the matrix representation of the Rayleigh quotient $\mbb P_{\mc K_m} \restr{\A}{\mc K_m}$ in the basis formed by the columns of $\X_m$. In practical computation, the coefficients of $\mathbf{c}$ are obtained from the solution of the least squares problem $\| \X_m \mathbf{c} - \f_{m+1}\|_2\rightarrow\min$.

The spectral decomposition of $C_m$ has beautiful structure. Assume for simplicity that the eigenvalues $\lambda_i$, $i=1,\ldots, m$, are algebraically simple. It is easily checked that the rows of $\Vanderm_m$ are the left eigenvectors of $C_m$, so the columns of $\Vanderm_m^{-1}$ are the (right) eigenvectors of $C_m$; they are essentially unique.  Hence, the spectral decomposition of $C_m$ reads
\begin{equation}\label{zd:eq:C-V-Lambda}
C_m = \Vanderm_m^{-1}\Lambda_m \Vanderm_m,\;\;\mbox{where}\;\;
\Lambda_m = \begin{pmatrix} \lambda_1 &  & \cr
& \ddots &   \cr 
&        & \lambda_m\end{pmatrix},\;\;
\Vanderm_m = \begin{pmatrix} 
1 & \lambda_1 & \ldots & \lambda_1^{m-1} \cr
1 & \lambda_2 & \ldots & \lambda_2^{m-1} \cr
\vdots & \vdots & \ldots & \vdots \cr
1 & \lambda_m & \ldots & \lambda_m^{m-1} \cr
\end{pmatrix} .
\end{equation}

\begin{proposition} The columns of $\widehat{W}_m\equiv \X_m \Vanderm_m^{-1}$ are the Ritz vectors, with the corresponding Ritz values $\lambda_1, \ldots, \lambda_m$. For a Ritz pair $(\lambda_j, \widehat{W}_m(:,j))$, the residual is given by
\begin{equation}
\frac{\| \A \widehat{W}_m(:,j) - \lambda_j \widehat{W}_m(:,j)\|_2}{\|\widehat{W}_m(:,j)\|_2} = \frac{\|\mbf r_{m+1}\|_2}{\|\widehat{W}_m(:,j) \|_2} \prod_{\stackrel{k=1}{k\neq j}}^m \frac{1}{|\lambda_j-\lambda_k|} .
\end{equation}
\end{proposition}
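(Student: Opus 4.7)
The plan is to combine the Krylov recurrence \eqref{zd:eq:AK=KC+R} with the spectral decomposition \eqref{zd:eq:C-V-Lambda} of the companion matrix, and then isolate the residual column-by-column, reducing the problem to identifying a single entry of $\Vanderm_m^{-1}$.

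First I would substitute $C_m = \Vanderm_m^{-1}\Lambda_m \Vanderm_m$ into $\A\X_m = \X_m C_m + E_{m+1}$ and right-multiply by $\Vanderm_m^{-1}$. This yields
\[
\A \widehat{W}_m = \widehat{W}_m \Lambda_m + E_{m+1}\Vanderm_m^{-1},
\]
and since $E_{m+1}=\mbf r_{m+1}\mbf e_m^T$, the correction matrix on the right has rank one: $E_{m+1}\Vanderm_m^{-1} = \mbf r_{m+1} (\mbf e_m^T \Vanderm_m^{-1})$, i.e.\ its $j$-th column is $[\Vanderm_m^{-1}]_{m,j}\,\mbf r_{m+1}$. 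Extracting the $j$-th column of the identity above then gives
\[
\A \widehat{W}_m(:,j) - \lambda_j \widehat{W}_m(:,j) = [\Vanderm_m^{-1}]_{m,j}\,\mbf r_{m+1},
\]
which confirms the Ritz property (the residual vanishes up to a term orthogonal to $\mc{K}_m$) and reduces the residual norm to $|[\Vanderm_m^{-1}]_{m,j}|\,\|\mbf r_{m+1}\|_2$.

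The only remaining task is to identify the $(m,j)$ entry of $\Vanderm_m^{-1}$ in closed form. Here I would invoke Lagrange interpolation: for any polynomial $p(x)=\sum_{i=0}^{m-1} a_i x^i$, the coefficient vector $\mathbf{a}$ and the value vector $(p(\lambda_1),\ldots,p(\lambda_m))^T$ are related by $\Vanderm_m \mathbf{a} = \mathbf{y}$, and the Lagrange formula $p(x)=\sum_{j=1}^m p(\lambda_j)\,\ell_j(x)$ with $\ell_j(x)=\prod_{k\neq j}(x-\lambda_k)/(\lambda_j-\lambda_k)$ shows that the leading coefficient $a_{m-1}$ equals $\sum_j p(\lambda_j)/\prod_{k\neq j}(\lambda_j-\lambda_k)$. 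Reading off the last row of $\Vanderm_m^{-1}$ from this identity gives
\[
[\Vanderm_m^{-1}]_{m,j} = \frac{1}{\prod_{k\neq j}(\lambda_j-\lambda_k)}.
\]

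Finally, substituting this into the residual expression, taking Euclidean norms, and dividing by $\|\widehat{W}_m(:,j)\|_2$ produces exactly the claimed formula. The main (minor) obstacle is the Lagrange-based identification of the last row of $\Vanderm_m^{-1}$; once that is in hand, everything else is a direct algebraic manipulation of the Krylov recurrence, and the simplicity assumption on the $\lambda_j$ only enters to guarantee that $\Vanderm_m$ is invertible and the denominators in the product are nonzero.
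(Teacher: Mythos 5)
Your argument is correct and follows essentially the same route as the paper: right-multiply the Krylov relation by $\Vanderm_m^{-1}$ to get $\A\widehat{W}_m=\widehat{W}_m\Lambda_m+\mbf r_{m+1}(\mbf e_m^T\Vanderm_m^{-1})$, then identify the last row of $\Vanderm_m^{-1}$ and take norms columnwise. The only difference is cosmetic: where the paper cites Turner's explicit inverse-Vandermonde formulas for $\mbf e_m^T\Vanderm_m^{-1}$, you derive the same entries self-containedly from the leading coefficients of the Lagrange basis polynomials, which is a valid (and arguably more transparent) justification of the same fact.
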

\begin{proof}
It holds that $\A \widehat{W}_m = \widehat{W}_m \Lambda_m + \mbf r_{m+1}e_m^T\Vanderm_m^{-1}$,
where for the last row of $\Vanderm_m^{-1}$ we can use the formulas from \cite{Turner-Vadermonde-inverse-66} to obtain
$$
e_m^T\Vanderm_m^{-1} = \left(\begin{matrix} {\displaystyle \prod_{\stackrel{k=1}{k\neq 1}}^m \frac{1}{\lambda_1-\lambda_k}},\; {\displaystyle \prod_{\stackrel{k=1}{k\neq 2}}^m \frac{1}{\lambda_2-\lambda_k}},\;\ldots,\; {\displaystyle \prod_{\stackrel{k=1}{k\neq m-1}}^m \!\!\frac{1}{\lambda_{m-1}-\lambda_k}}, \; {\displaystyle\prod_{\stackrel{k=1}{k\neq m}}^m \frac{1}{\lambda_m-\lambda_k}} \end{matrix}\right) .
$$
 Hence, for a particular Ritz pair $(\lambda_j, \widehat{W}_m(:,j))$ we have
\begin{equation}
\A \widehat{W}_m(:,j) = \lambda_j \widehat{W}_m(:,j) + \mbf r_{m+1} \prod_{\stackrel{k=1}{k\neq j}}^m \frac{1}{\lambda_j-\lambda_k}.
\vspace{-9mm}
\end{equation}
\end{proof}

\subsubsection{Modal representation of the snapshots}\label{zd:SSS:modal-repr-V}
Once the Ritz pairs have provided useful spectral information on $\A$, we would like to analyze the snapshots $\f_i$ in terms of spectral data.  
%
To that end, write
\begin{equation}\label{zd:eq:X-W-V}
\X_m = (\X_m \Vanderm_m^{-1}) \Vanderm_m \equiv \widehat{W}_m \Vanderm_m, \;\;\A \X_m = \widehat{W}_m (\Lambda_m \Vanderm_m) + \mbf r_{m+1}e_m^T,
\end{equation}
and, with the column partition $\widehat{W}_m = \begin{pmatrix} \widehat{w}_1 & \ldots & \widehat{w}_m\end{pmatrix}$ of $\widehat{W}_m$, define 
\begin{equation}\label{zd:eq:Wm}
\mathfrak{a}_j = \|\widehat{w}_j\|_2,\;\; D_{\mathfrak{a}}=\mathrm{diag}(\mathfrak{a}_j)_{j=1}^m,\;\; W_m = \widehat{W}_m D_{\mathfrak{a}}^{-1}=\begin{pmatrix} {w}_1 & \ldots & {w}_m\end{pmatrix}. 
\end{equation}
Then, from the first relation in (\ref{zd:eq:X-W-V}), 
we have
\begin{equation}\label{zd:eq:f_i-dmd}
\f_i = \sum_{j=1}^m \widehat{w}_j \lambda_j^{i-1} \equiv \sum_{j=1}^m w_j \mathfrak{a}_j \lambda_j^{i-1},\;\;i=1,\ldots , m \; \Longleftrightarrow \X_m = \widehat{W}_m \Vanderm_m \equiv W_m D_{\mathfrak{a}} \Vanderm_m , 
\end{equation}
and from the last column in the second relation in (\ref{zd:eq:X-W-V}) we have
\begin{equation}\label{zd:eq:f_m+1-dmd}
\f_{m+1} = \sum_{j=1}^m \widehat{w}_j \lambda_j^m + \mbf r_{m+1} = \sum_{j=1}^m w_j \mathfrak{a}_j \lambda_j^m + \mbf r_{m+1} .
\end{equation}
It is important to note that the decompositions (or reconstructions) 
(\ref{zd:eq:f_i-dmd}, \ref{zd:eq:f_m+1-dmd}) are by definition attached to the Ritz pairs of $\A$, formed using the spectral decomposition (\ref{zd:eq:C-V-Lambda}) of the matrix representation $C_m$ of $\mathbb{P}_{\mathcal{K}_m} \restr{\A}{\mathcal{K}_m}$. If $\mathcal{K}_m$ is close to being an $\A$-invariant subspace, then $\A w_j \approx \lambda_j w_j$, i.e. the decompositions (\ref{zd:eq:f_i-dmd}, \ref{zd:eq:f_m+1-dmd}) are approximately in terms of the eigenpairs  of $\A$.

\begin{remark}\label{zd:REM:amplitudes}

		Note that $\|w_j\|_2=1$, $j=1,\ldots, m$, and that 
		$$
		D_{\mathfrak{a}}\Vanderm_m(:,1)=W_m^{\dagger}\X_m(:,1)=(\mathfrak{a}_j)_{j=1}^m\in\mathbb{R}^m, 
		$$		
		Of course, we can replace each term $w_j \mathfrak{a}_j$ with $(w_j\ee^{\ii\psi_j})(\ee^{-\ii\psi_j}\mathfrak{a}_j)$ (thus redefining $w_j$ and $\mathfrak{a}_j$) without affecting the decompositions (\ref{zd:eq:f_i-dmd}, \ref{zd:eq:f_m+1-dmd}), but this normalization to real $\mathfrak{a}_j$'s seems reasonable if we  interpret those numbers as amplitudes. Further,  the amplitudes and the scalings of the Ritz vectors can be done in some other appropriate norm instead of in $\|\cdot\|_2$. However if we pursued the computation of modes from more than one initial condition, the complex form of the amplitudes would be enforced by the requirement that modes are independent of initial conditions \cite{Mezic:2005}.
	
\end{remark}

\subsection{On computing the eigenvalues of $C_m$}
Since the spectral decomposition (\ref{zd:eq:C-V-Lambda}) of $C_m$ is given explicitly from its eigenvalues, it remains to compute the $\lambda_i$'s efficiently and in a numerically robust way. 
Computing the  eigenvalues of $C_m$ is equivalent to finding the zeros of its characteristic  polynomial $\wp_m(z) = z^m - \sum_{j=1}^m c_j z^{j-1}$, but this is only an elegant theoretical connection that has limited value in practical computation. In fact, the most robust polynomial root finding procedure is based on solving the matrix eigenvalue problem, while taking the structure of the companion matrix $C_m$ into account. For an excellent mathematical elucidation we refer to  \cite{edelman-murakami-roots-1995}, \cite{PAN20111305}. 

From the software point of view, if we want truly high performance, both in terms of numerical robustness and run time efficiency, using an eigenvalue method for general matrices (such as e.g. \texttt{eig()} in Matlab) is not the best choice. Namely, in that case the eigenvalues are extracted from the Schur form with the backward error $\delta C_m$ that is small in the sense that $\|\delta C_m\|_2/ \|C_m\|_2$ is of the order of the machine roundoff $\roff$, but $C_m+\delta C_m$ is not a companion matrix and there is no information on the size of the backward error in the coefficients $c_i$. Further, the computation requires $O(m^2)$ memory space and $O(m^3)$ \emph{flops}.

Proper method for this case is the one that preserves the structure of the companion matrix and for which the computed eigenvalues correspond exactly to the eigenvalues of a companion matrix $\widetilde{C}_m$ with coefficients $\widetilde{c}_i=c_i + \delta c_i$, where $\max_i |\delta c_i| / |c_i|$ is small. Note that backward stability in terms of the coefficients $c_i$ is proper framework for assessing the numerical accuracy as the coefficients $c_i$ are the results from previous computation -- solving the least squares problem $\| \X_m \mathbf{c} - \f_{m+1}\|_2\rightarrow\min$. The desired complexity is $O(m)$ in memory space and $O(m^2)$ in \emph{flop} count. 

Two methods that satisfy the above requirements are presented in 
\cite{BINI20102006}, \cite{Aurentz-Mach-Raf-David-2015} and they should be used in high performance software implementations.
%
%
Particularly interesting is the unitary--plus--rank one formulation 
\begin{equation}\label{zd:eq:C=U+r1}
C_m \! =\! \begin{pmatrix} 0 & 0 & \ldots & 0 & c_1 \cr
1 & 0 & \ldots & 0 & c_2 \cr 
0 & 1 & \ldots & 0 & c_3 \cr
\vdots & \ddots & \ddots & \vdots & \vdots \cr
0 & 0 & \ldots & 1 & c_m\end{pmatrix} \! = \! 
\begin{pmatrix} 0 & 0 & \ldots & 0 & \pm 1 \cr
1 & 0 & \ldots & 0 & 0 \cr 
0 & 1 & \ldots & 0 & 0 \cr
\vdots & \ddots & \ddots & \vdots & \vdots \cr
0 & 0 & \ldots & 1 & 0\end{pmatrix} + \begin{pmatrix} \mp 1 + c_1 \cr c_2 \cr c_3 \cr \vdots \cr c_m \end{pmatrix} \! \begin{pmatrix} 0 & 0 & \ldots & 0 & 1 \end{pmatrix} \equiv U_m + \widehat{\mathbf{c}} \mbf e_m^T
\end{equation}
which has been exploited in \cite{BINI20102006} to construct an efficient implicit  QR iterations process that requires $O(m)$ memory and $O(m^2)$ \emph{flops}. This splitting, when plugged into (\ref{zd:eq:AK=KC+R}), provides the following intersting form of the Krylov decomposition.
\begin{proposition} If in the splitting (\ref{zd:eq:C=U+r1}) we choose $(U_m)_{1,m}=1$, then we can write (\ref{zd:eq:AK=KC+R}) as 
\begin{equation}
\A \X_m = \X_m U_m + (\f_{m+1}-\f_1)e_m^T.
\end{equation}
\end{proposition}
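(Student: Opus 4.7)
The plan is a direct substitution argument. Starting from the Krylov decomposition (\ref{zd:eq:AK=KC+R}) $\A \X_m = \X_m C_m + \mbf r_{m+1}\mbf e_m^T$ with $\mbf r_{m+1} = \f_{m+1} - \X_m \mathbf{c}$, I would plug in the splitting (\ref{zd:eq:C=U+r1}) $C_m = U_m + \widehat{\mathbf{c}}\mbf e_m^T$ and simplify the resulting rank-one correction to the final column.

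First, I would unpack what the sign choice $(U_m)_{1,m} = 1$ dictates for $\widehat{\mathbf{c}}$. Reading off (\ref{zd:eq:C=U+r1}) with the upper sign, one has $\widehat{\mathbf{c}}_1 = c_1 - 1$ and $\widehat{\mathbf{c}}_j = c_j$ for $j = 2,\dots,m$, i.e. $\widehat{\mathbf{c}} = \mathbf{c} - \mbf e_1$.

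Next, substituting the splitting into (\ref{zd:eq:AK=KC+R}) gives
\begin{equation*}
\A \X_m = \X_m U_m + \X_m \widehat{\mathbf{c}}\,\mbf e_m^T + \mbf r_{m+1}\mbf e_m^T = \X_m U_m + \bigl(\X_m\widehat{\mathbf{c}} + \mbf r_{m+1}\bigr)\mbf e_m^T.
\end{equation*}
Since $\X_m \mbf e_1 = \f_1$, we have $\X_m \widehat{\mathbf{c}} = \X_m\mathbf{c} - \f_1$, and combining with $\mbf r_{m+1} = \f_{m+1} - \X_m \mathbf{c}$ yields the cancellation $\X_m\widehat{\mathbf{c}} + \mbf r_{m+1} = \f_{m+1} - \f_1$, which is exactly the claimed identity.

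I do not anticipate any obstacle: the whole point of the proposition is that the specific choice $(U_m)_{1,m}=1$ is precisely the one that makes the correction vector collapse to $\f_{m+1}-\f_1$, absorbing both the projection residual and the coefficient-shift into a simple difference of snapshots. The only care needed is bookkeeping of the sign and the observation that $\X_m\mbf e_1 = \f_1$.
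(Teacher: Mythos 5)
Your proof is correct and is essentially identical to the paper's one-line argument: substitute $C_m = U_m + \widehat{\mathbf{c}}\,\mbf e_m^T$ with $\widehat{\mathbf{c}} = \mathbf{c} - \mbf e_1$ into the Krylov decomposition and observe that $(\X_m\mathbf{c} - \f_1) + \mbf r_{m+1} = \f_{m+1} - \f_1$. Your write-up just spells out the bookkeeping the paper leaves implicit.
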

\begin{proof}
Note that $\A\X_m = \X_m U_m + ( ( \X_m \mathbf{c}- \f_1)   + \mbf r_{m+1})\mbf e_m^T$, where $\mbf r_{m+1} = \f_{m+1}-\X_m \mathbf{c}$.	
\end{proof}
%
%
%
%

An important remark is in order.
\begin{remark}\label{zd:REM:eig-vec-sens}
If the eigenvalues $\lambda_i$'s of $C_m$ are computed as $\widetilde{\lambda}_i$'s with small backward error in the vector of the coefficients $\mathbf{c}$, then
$\Vanderm_m(\widetilde{\lambda}_i)^{-1}$ is (assuming that all $\widetilde{\lambda}_i$'s are algebraically simple) the exact eigenvector matrix of the companion matrix $\widetilde{C}_m$ defined by $\mathbf{c}+\delta\mathbf{c}$, where $\|\delta\mathbf{c}\|_2/\|\mathbf{c}\|_2$ is small.	
To appreciate this fact more, consider the general case. If we compute the eigenvectors of a general square matrix $S$, then the accuracy depends on the condition number of the eigenvectors and on the gap between an eigenvalue and it neighbors in the spectrum. More precisely, if $\lambda$ is a simple eigenvalue of a diagonalizable $S$, with unit eigenvector $y$, then with appropriate nonsingular $Y=( y\; Y_2)$ and $Z=(z \; Z_2)=Y^{-1}$, $Z^* S Y = \left(\begin{smallmatrix} \lambda & \0 \cr \0 & \Lambda_2\end{smallmatrix}\right)$ is diagonal. If $(\widetilde{\lambda},\widetilde{y})$ is an eigenpair of $S+E$, then, under some additional assumptions,
$$
\| \widetilde{y}-y\|_2 \leq \| (\lambda I - \Lambda_2)^{-1}\|_2 \|Y_2\|_2 \|Z_2\|_2 \|E\|_2 \leq \frac{\kappa_2(Y)}{\min_{j}|\lambda - (\Lambda_2)_{jj}|} \|E\|_2.
$$
 So, for instance, if a group of eigenvalues is tightly clustered, then their corresponding eigenvectors are extremely sensitive and difficult to compute numerically. Here we can set $E=-r\widetilde{y}^*$, where $r=S\widetilde{y}-\widetilde{\lambda}\widetilde{y}$ is the residual. For more details ee e.g. \cite{eis-ipsen-bit-1998}, \cite[Ch. V.,\S 2]{ste-sun-90}, \cite[\S 3.2.2]{Bjorck-book-2015}.
\end{remark}

\subsection{Computation with Vandermonde matrices}\label{SS=Comp-Vandermonde}
The natural representation of the evolving dynamics $\f_{i+1}=\A \f_i$ by the Krylov decomposition (\ref{zd:eq:AK=KC+R}) and the simple and elegant snapshots' decompositions (\ref{zd:eq:f_i-dmd}, \ref{zd:eq:f_m+1-dmd}) have not lead to a numerical scheme that can be used in practical computations.
The reason is in numerical difficulties when computing the matrix $\widehat{W}_m = \X_m \Vanderm_m^{-1}$ of the Ritz vectors, due to potentially extremely high condition number of the Vandermonde matrix $\Vanderm_m$.

And indeed, the Vandermonde matrices can be arbitrarily badly ill-conditioned \cite{Pan-How-bad}. More precisely, the condition number $\kappa_2(\Vanderm_m)\equiv \|\Vanderm_m\|_2 \|\Vanderm_m^{-1}\|_2$ will depend on the distribution of the eigenvalues $\lambda_i$ and it can be as small as one  and it can grow with the dimension $m$ as fast as $O(m^{m+1})$ for harmonically distributed $\lambda_i$'s, see Gautschi \cite{Gautschi1975}, \cite{Gautschi-How-Unstable}. For example, any real $n\times n$ Vadermonde matrix has condition number greater than $2^{n-2}/\sqrt{n}$.
As an illustration, we show on Figure \ref{zd:fig:V20_condition} the values of 
$\kappa_2(\Vanderm_{20})$ on three sets, each containing $100$ matrices. Recall that the classical upper bound on the relative error in the solution of linear systems is $O(n)\roff\kappa_2(\Vanderm_m)$, so that $\kappa_2(\Vanderm_m) > 1/(n\roff)$ implies no accuracy whatsoever.

On the other hand, if the $\lambda_i$'s are the $m$th roots of unity, then $\Vanderm_m$ is the unitary Discrete Fourier Tranformation (DFT) matrix with $\kappa_2(\Vanderm_m)=1$. In fact, if the $\lambda_i$'s are on the unit circle, then $\kappa_2(\Vanderm_m)$ is, under some additional assumptions (e.g. that the nodes are not tightly clustered), usually moderate \cite{fermin-bazan-vanderm-2000}, \cite{berman-feuer-vanderm-2007}.
If the underlying operator is nearly unitary then the $\lambda_i$'s will be close to the unit circle and $\Vanderm_m$ is usually well conditioned. Further, it is well-known that the condition number can be reduced by scaling and this opens a possibility for accurate reconstruction, at least in those
cases when scaling sufficiently reduces the condition number. 

 With an unlucky distribution of the nodes, a Vandermonde matrix can be extremely ill-conditioned even for small dimensions, e.g. if $|\xi|\ll 1$ is small, then both matrices
 $$
 \Vanderm_2 = \left( \begin{smallmatrix} 1 & 1-\xi \cr 1 & 1+\xi \end{smallmatrix}\right),\;\;
 \Vanderm_3 = \left( \begin{smallmatrix} 1 & -1 & 1  \cr 1 & 1 & 1 \cr
 1 & 1+\xi & (1+\xi)^2 \end{smallmatrix}\right)
 $$
 can be turned into singular ones by small $O(|\xi|)$ changes, so their condition numbers are $O(1/|\xi|)\gg 1$. Furthermore, the rows as well as the columns of both $\Vanderm_2$ and $\Vanderm_3$ above are nearly equilibrated (nearly of same $\ell_1$, $\ell_2$ or $\ell_\infty$ norm) so that the condition number cannot be improved by diagonal scalings.

\begin{figure}[H]
	\begin{center}
		\includegraphics[width=\linewidth, height=2.5in]{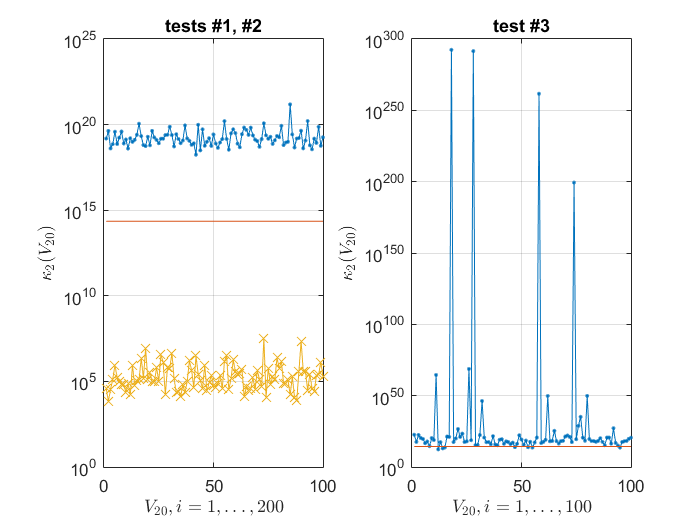}
\includegraphics[width=0.45\linewidth, height=2.5in]{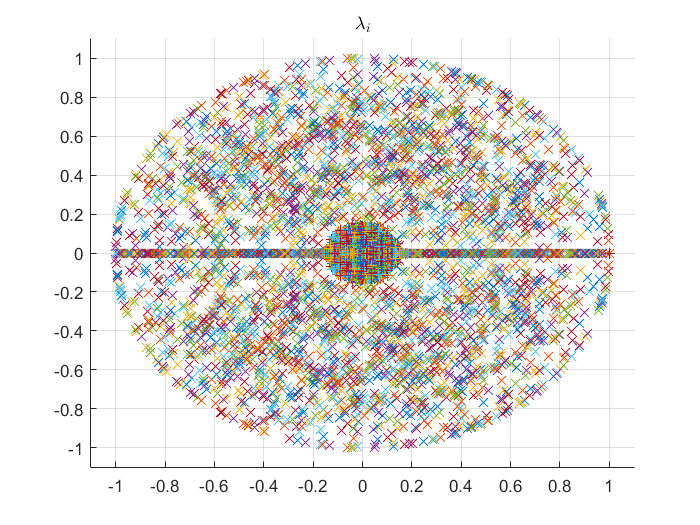}
		\includegraphics[width=0.45\linewidth, height=2.5in]{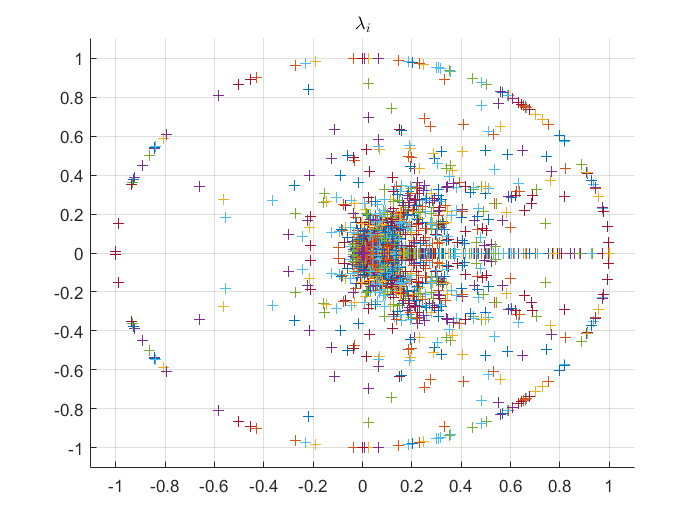}
	\end{center}	
	\caption{\label{zd:fig:V20_condition} The spectral condition number over three sets of the total of $300$ Vandermonde matrices of dimension $m=20$, $\Vanderm_{20}(\lambda_i)$. The $\lambda_i$'s are the eigenvalues of pseudo-random  matrices $A$, scaled to have unit spectral radius. \emph{Left panel}: First, $100$ matrices are generated in Matlab as $A=\texttt{rand(20,20)}$, $A=A/\texttt{max(abs(eig(A)))}$; the condition numbers ($\cdot-$) of the thus generated Vandermonde matrices are all above $10^{15}$. Then, $100$ matrices are generated as  $A=\texttt{randn(20,20)}$, $A=A/\texttt{max(abs(eig(A)))}$; in all cases $\kappa_2(\Vanderm_{20}(\lambda_i)) < 10^{10}$. \emph{Right panel}: $100$ samples of $\Vanderm(\lambda_i)$ are generated using the eigenvalues of $A=\texttt{expm(-inv(rand(n,n)))}$, $A=A/\texttt{max(abs(eig(A)))}$. The horizontal line is the value of $1/(n\roff)\approx \texttt{2.25e+14}$. The plots in the second row show all $\lambda_i$'s used to generate the matrices $\Vanderm_{20}(\lambda_i)$}
\end{figure}


In certain cases, accurate computation is possible independent of the condition number. The Bj\"{o}rck-Pereyra algorithm \cite{BjoP70} can solve the Vandermonde system to high accuracy
if the $\lambda_i$'s are real, of same sign and increasingly ordered, see \cite{Hig87b}. It is the distribution of the signs that determines whether a catastrophic cancellation can occur and in our case, unfortunately, we cannot rely on such a constellation of the Ritz values as the $\lambda_i$'s are in general complex numbers.
Our main goal in this work is to provide a numerically robust implementation of the snapshot reconstruction scheme outlined in \S \ref{SS=Modal-appr-snapshots},  that is independent of the distribution of the Ritz values $\lambda_i$. 

\subsection{Reconstruction using Schmid's DMD -- an alternative to working with Vandermonde matrices}\label{SS=Reconstruct_DMD} 
To alleviate numerical difficulties caused by the inherent ill-conditioning of the matrix $\X_m$ and of the eigenvalues of the companion matrix, 
Schmid \cite{schmid2010} proposed using, instead of the companion matrix $C_m$, the Rayleigh quotient $S_m = U_m^* \A U_m$, where $\X_m=U_m \Sigma_m V_m^*$ is the SVD of $\X_m$. More precisely, the SVD is used to determine a numerical rank $k$ of $\X_m$, and then the Rayleigh quotient is $S_k=U_k^* \A U_k$, where the columns of $U_k$ are the leading $k$ left singular vectors of $\X_m$. The resulting scheme, designated as DMD (Dynamic Mode Decomposition) and outlined in Algorithm \ref{zd:ALG:DMD} below, has become a tool of trade in computational fluid dynamics.


\begin{algorithm}[H]
	\caption{{$[Z_k, \Lambda_k]=\mathrm{DMD}(\X_m,\Y_m)$}}
	\label{zd:ALG:DMD}
	\begin{algorithmic}[1]
		\REQUIRE \  \\		
		\begin{itemize} 
			\item $\X_m=(\x_1,\ldots,\x_m), \Y_m=(\y_1,\ldots,\y_m)\in \mathbb{C}^{n\times m}$ that define a sequence of snapshots pairs $(\x_i,\y_i\equiv \A \x_i)$. (Tacit assumption is that $n$ is large and that $m \ll n$.)
		\end{itemize}
		\STATE $[U,\Sigma, \Phi]=svd(\X_m)$ ; \COMMENT{\emph{The thin SVD: $\X_m = U \Sigma \Phi^*$, $U\in\mathbb{C}^{n\times m}$, $\Sigma=\mathrm{diag}(\sigma_i)_{i=1}^m$}}
		\STATE Determine numerical rank $k$.
		\STATE Set $U_k=U(:,1:k)$, $\Phi_k=\Phi(:,1:k)$, $\Sigma_k=\Sigma(1:k,1:k)$ 	
		\STATE ${S}_k = (({U}_k^* \Y_m) \Phi_k)\Sigma_k^{-1}$; \COMMENT{\emph{Schmid's formula for the Rayleigh quotient $U_k^* \A U_k$}}
		\STATE $[B_k, \Lambda_k] = \mathrm{eig}(S_k)$ \COMMENT{$\Lambda_k=\mathrm{diag}(\lambda_j)_{j=1}^k$; $S_k B_k(:,j)=\lambda_j B_k(:,j)$; $\|B_k(:,j)\|_2=1$}\\
		\COMMENT{\emph{We always assume the generic case that $S_k$ is diagonalizable, i.e. that in Line 5. the function \texttt{eig()} computes the full column rank matrix $B_k$ of the eigenvectors of $S_k$.}}
		\STATE $Z_k = U_k B_k$ \COMMENT{\emph{Ritz vectors}}
		\ENSURE $Z_k$, $\Lambda_k$
	\end{algorithmic}
\end{algorithm}
\noindent In this way, the matrix $\widehat{W}_m$ of the Ritz vectors and the amplitudes for the representation of the snapshot are computed without having to invert Vandermonde matrix. The two approaches are algebraically equivalent, and in the following proposition we provide the details.

\begin{proposition}\label{PROP:amplitudes} The amplitudes $\mathfrak{a}_j$ in the decompositions (\ref{zd:eq:f_i-dmd}, \ref{zd:eq:f_m+1-dmd}) can be equivalently computed as
	\begin{equation}\label{eq:a_j-formulas}
	\mathfrak{a}_j \equiv \| \X_m\Vanderm_m^{-1}e_j\|_2 = |Z_m^\dagger\X_m(:,1)|,\;\;j=1,\ldots, m.
	\end{equation}
\end{proposition}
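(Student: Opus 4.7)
The first equality in (\ref{eq:a_j-formulas}) is immediate from the definitions $\widehat{W}_m = \X_m \Vanderm_m^{-1}$ and $\mathfrak{a}_j = \|\widehat{w}_j\|_2$ in (\ref{zd:eq:Wm}), so all the work is in establishing the second equality. The plan is to argue that the Schmid Ritz vectors $Z_m = U_m B_m$ (with full numerical rank $k=m$) agree with the columns of $W_m = \widehat{W}_m D_{\mathfrak{a}}^{-1}$ up to a unimodular diagonal scaling, and then to extract $\mathfrak{a}_j$ from $Z_m^\dagger \f_1$ using the identity $\f_1 = \sum_j \widehat{w}_j$ that falls out of (\ref{zd:eq:f_i-dmd}) at $i=1$.

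First I would observe that both $\widehat{W}_m$ and $Z_m$ are bases of Ritz vectors for $\mbb{P}_{\mathcal{K}_m}\restr{\A}{\mathcal{K}_m}$, just written in different coordinates. Indeed, from $\X_m = U_m \Sigma_m \Phi_m^*$ one has $\mathrm{Ran}(U_m) = \mathcal{K}_m = \mathrm{Ran}(\X_m)$, and by construction $S_m = U_m^* \A U_m$ and $C_m = \X_m^\dagger \A \X_m$ both represent the same Rayleigh quotient in different bases; hence they share the spectrum $\{\lambda_j\}_{j=1}^m$, and their eigenvectors pushed into $\C^n$ span the same one-dimensional Ritz eigenspaces. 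Under the assumption that the Ritz values are simple, each such eigenspace is one-dimensional, so the unit-norm columns of $W_m$ and $Z_m$ corresponding to the same $\lambda_j$ must satisfy $z_j = e^{\ii\phi_j} w_j$ for some $\phi_j \in \R$. Equivalently, $\widehat{W}_m = Z_m D_\phi D_{\mathfrak{a}}$, with $D_\phi = \mathrm{diag}(e^{-\ii\phi_j})$ unitary.

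Next, since $S_m$ is diagonalizable with distinct eigenvalues, $B_m$ is nonsingular, and hence $Z_m = U_m B_m$ has full column rank $m$; its Moore--Penrose inverse therefore satisfies $Z_m^\dagger Z_m = I_m$. Applying $Z_m^\dagger$ to the identity
\begin{equation*}
\X_m(:,1) = \f_1 = \X_m \Vanderm_m^{-1} \Vanderm_m e_1 = \widehat{W}_m \mathbf{1} = \sum_{j=1}^m \widehat{w}_j = Z_m D_\phi D_{\mathfrak{a}} \mathbf{1},
\end{equation*}
where $\mathbf{1} = (1,\ldots,1)^T$ is the first column of $\Vanderm_m$, yields $Z_m^\dagger \X_m(:,1) = D_\phi D_{\mathfrak{a}} \mathbf{1} = (e^{-\ii\phi_j}\mathfrak{a}_j)_{j=1}^m$. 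Taking moduli componentwise and using $\mathfrak{a}_j \geq 0$ gives the claimed identity $|(Z_m^\dagger \X_m(:,1))_j| = \mathfrak{a}_j$ for each $j$.

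The only delicate point in this plan is step two: the justification that the unit-norm Ritz vectors produced by the two numerically distinct paths (companion--Vandermonde versus SVD--$S_m$) really differ only by unimodular phases. This relies on simplicity of the Ritz values (which is a standing assumption), on $B_m$ being nonsingular (guaranteed by the diagonalizability of $S_m$), and on the fact that both $W_m$ and $Z_m$ have unit columns by construction; once these are in place the rest is a one-line computation.
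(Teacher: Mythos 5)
Your proof is correct and takes essentially the same route as the paper: both arguments hinge on establishing that $Z_m = W_m \Psi$ for a unitary diagonal $\Psi$ and then applying $Z_m^\dagger$ to $\f_1 = \X_m e_1 = \widehat{W}_m \mathbf{1}$. The only difference is that you justify the phase relation abstractly (uniqueness, up to scaling, of the Ritz vector attached to a simple Ritz value of the common compression $\mbb P_{\mc K_m}\restr{\A}{\mc K_m}$), whereas the paper derives it computationally via the explicit similarity $S_m = \Sigma_m \Phi_m^* C_m \Phi_m \Sigma_m^{-1}$; both justifications are valid.
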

\begin{proof} Let $S_m b_j = \lambda_j b_j$, with $b_j=B_m(:,j)$, $\|b_j\|_2=1$. Since
$$
C_m = (\X_m^* \X_m)^{-1}(\X_m^* \A \X_m) = (\Phi_m\Sigma_m^{-2}\Phi_m^*)(\Phi_m\Sigma_m U_m^*) \A (U_m\Sigma_m \Phi_m^*) = \Phi_m \Sigma_m^{-1}(U_m^*\A U_m)\Sigma_m\Phi_m^*,
$$
we have $S_m = \Sigma_m \Phi_m^* C_m \Phi_m \Sigma_m^{-1}$, and  
$
C_m (\Phi_m \Sigma_m^{-1} b_j) = \lambda_j (\Phi_m\Sigma_m^{-1}b_j) .
$
Further, since $\lambda_j$ is assumed simple, $\Phi_m\Sigma_m^{-1}b_j$ and $(\Vanderm_m^{-1})(:,j)$ are collinear and thus, $b_j$ is collinear with $\Sigma_m \Phi_m^* (\Vanderm_m^{-1})(:,j)$. Note that 
$$
\|\Sigma_m \Phi_m^* (\Vanderm_m^{-1})(:,j)\|_2 = \|U_m \Sigma_m \Phi_m^* (\Vanderm_m^{-1})(:,j) \|_2 = \| \X_m (\Vanderm_m^{-1})(:,j)\|_2 .
$$ 
Since $\|b_j\|_2=1$, it holds with some $\psi_j\in\mathbb{R}$,
$$
b_j = \Sigma_m \Phi_m^* (\Vanderm_m^{-1})(:,j) \frac{\ee^{\ii\psi_j}}{\mathfrak{a}_j} ,
$$ 
and the corresponding Ritz vector $z_j = Z_m(:,j)$ then reads
$$
 z_j = U_m b_j = U_m \Sigma_m \Phi_m^* (\Vanderm_m^{-1})(:,j) \frac{\ee^{\ii\psi_j}}{\mathfrak{a}_j} = \X_m (\Vanderm_m^{-1})(:,j) \frac{\ee^{\ii\psi_j}}{\mathfrak{a}_j} = \widehat{W}_m(:,j) \frac{\ee^{\ii\psi_j}}{\mathfrak{a}_j} = W_m(:,j) \ee^{\ii\psi_j} .
$$
Hence, $Z_m = W_m \Psi$, $\Psi=\mathrm{diag}(\ee^{\ii\psi_j})_{j=1}^m$, and we seek reconstruction in the form 
\begin{equation}\label{eq:reconstruction-formula}
\X_m = \begin{pmatrix} z_1 & z_2 & \ldots & z_m \end{pmatrix} \begin{pmatrix} 
\widetilde{\mathfrak{a}}_1 &  &  & \cr
               & \widetilde{\mathfrak{a}}_2 &  &   \cr 
  &  & \ddots &        \cr 
&        &  & \widetilde{\mathfrak{a}}_m\end{pmatrix}
\begin{pmatrix} 
1 & \lambda_1 & \ldots & \lambda_1^{m-1} \cr
1 & \lambda_2 & \ldots & \lambda_2^{m-1} \cr
\vdots & \vdots & \ldots & \vdots \cr
1 & \lambda_m & \ldots & \lambda_m^{m-1} \cr
\end{pmatrix} .
\end{equation}
Note that such a decomposition exists by (\ref{zd:eq:f_i-dmd}), and (\ref{eq:reconstruction-formula}) is just a way of expressing it in terms of Algorithm \ref{zd:ALG:DMD}. 
If we equate the first columns on both sides in the above relation, then solving for the $\widetilde{\mathfrak{a}}_j$'s and using (\ref{zd:eq:f_i-dmd}) yields\footnote{Note that $Z_m^\dagger Z_m=I_m$.}
\begin{equation}\label{zd:eq:DMD-aplitudes}
(\widetilde{\mathfrak{a}}_j)_{j=1}^m = Z_m^\dagger\X_m(:,1)=\Psi^* W_m^{\dagger}\X_m(:,1) = \Psi^* \cdot (\mathfrak{a}_j)_{j=1}^m = (\ee^{-\ii\psi_j}\mathfrak{a}_j)_{j=1}^m .
\end{equation}
In terms of the quantities computed by Schmid's method, the representations (\ref{zd:eq:f_i-dmd}, \ref{zd:eq:f_m+1-dmd}) hold with $z_j\widetilde{\mathfrak{a}}_j$ instead of $w_j \mathfrak{a}_j$.
As discussed in Remark \ref{zd:REM:amplitudes}, we can scale $z_j$ by $\ee^{-\ii\psi_j}={\widetilde{\mathfrak{a}}_j^*}/|\widetilde{\mathfrak{a}}_j|$ and replace $\widetilde{\mathfrak{a}}_j$ with 
$\mathfrak{a}_j = |\widetilde{\mathfrak{a}}_j|$. 
\end{proof}

If Algorithm \ref{zd:ALG:DMD} uses $k<m$, then $S_k$ is a Rayleigh quotient of $C_m$, and the columns of $Z_k$ are not the same Ritz vectors as in $W_m$ from (\ref{zd:eq:Wm}). For more details on this connection, we refer to \cite{DDMD-SISC-2018}.

\begin{remark}
In the framework of the Schmid's DMD, the amplitudes are usually determined by the formula (\ref{zd:eq:DMD-aplitudes}). Since $Z_m = U_m B_m$ (see line 6. in Algorithm \ref{zd:ALG:DMD}) and $U_m^* U_m=I_m$, instead of applying the pseudoinverse of the explicitly computed $Z_m$, we use more the more efficient formula
\begin{equation}
Z_m^\dagger \X_m(:,1) = B_m^{-1}(U_m^* \X_m(:,1))=B_m^{-1} U_m^* U_m\Sigma_m \Phi_m^* e_1 = B_m^{-1} (\Sigma_m \Phi_m(1,:)^*).
\end{equation}
Recall that we assume that all $\lambda_j$'s are mutually distinct, so $B_m$ is of full rank. Since it can be ill-conditioned, we can use the (possibly truncated) SVD of $B_m$ and determine the $\widetilde{\mathfrak{a}}_j$'s as least squares solution. In the case of numerical rank deficiency in $B_m$, we can choose/prefer sparse solution (instead of the least norm).
\end{remark}

\begin{remark}
Recently, in \cite{DDMD-SISC-2018}, we proposed  a new computational scheme -- Refined Rayleigh Ritz Data Driven Modal Decomposition. It follows the DMD scheme, but it further allows data driven refinement of the Ritz vectors and computable data driven residuals. 

\end{remark}

\section{Numerical algorithm for computing $\widehat{W}_m=\X_m\Vanderm_m^{-1}$}\label{zd:SS=DFT+Cauchy-solver}
There is certain intrinsic elegance in the snapshots reconstructions (\ref{zd:eq:f_i-dmd}, \ref{zd:eq:f_m+1-dmd}) based on the spectral structure of the companion matrix (\ref{zd:eq:C-V-Lambda}). Unfortunately, the potentially high condition number of $\Vanderm_m$ precludes exploiting it in numerical computations. Indeed, even in relatively simple examples we can encounter $\kappa_2(\Vanderm_m)$ as high as $10^{50}$, $10^{70}$ or higher. If $\widehat{W}_m=\X_m\Vanderm_m^{-1}$ is computed in the standard double precision arithmetic with the roundoff $\roff\approx 2.2\cdot 10^{-16}$, the classical perturbation theory estimates the relative error in the computed result essentially as\footnote{Up to an factor that is polynomial in the dimensions of the problem.} $\roff \kappa_2(\Vanderm_m)$ (e.g. $2.2\cdot 10^{-16}\cdot 10^{50}$), thus rendering the output as entirely wrong and useless. The Shmid's DMD provides an alternative path that avoids $\Vanderm_m^{-1}$, as outlined in \S \ref{SS=Reconstruct_DMD}.

We now explore another way to curb ill-conditioning, based on the methods of numerical linear algebra \cite{dgesvd-99}, \cite{Demmel-99-AccurateSVD}, \cite{demmel-koev-2006-PolynVandermonde}. Our starting point, based on the state of the art in numerical linear algebra, is that accurate computation with notoriously ill-conditioned Vandermonde matrices may be possible despite high classical condition number.

\subsection{A review of accurate computation with Vandermonde matrices}
For the reader's convenience, we briefly describe the two main components of computational schemes capable of performing linear algebra operations with $\Vanderm_m$ accurately in standard floating point arithmetic. 

First, the Discrete Fourier Transform (DFT) of a Vandermonde matrix $\Vanderm_m$ is a generalized Cauchy matrix parametrized by the $\lambda_i$'s (the original parameters that define $\Vanderm_m$) and the $m$th roots of unity. In our setting, DFT is an allowable and actually meaningfully interpretable transformation, and the result obtained by using the auxiliary Cauchy matrix are easily back substituted into the original formulation. The details are given in \S \ref{SS=DFT(V)}. 

Secondly,  in the framework of \cite{dgesvd-99}, \cite{Demmel-99-AccurateSVD},  accurate computation with generalized Cauchy matrix is possible (e.g. LDU decomposition, SVD) independent of its condition number.

\subsubsection{Discrete Fourier Transform of $\Vanderm_m$}\label{SS=DFT(V)}
Let $\DFT$ denote the Discrete Fourier Transform (DFT) matrix, $\DFT_{ij}=\omega^{(i-1)(j-1)}/\sqrt{m}$, where $\omega=\ee^{2\pi \ii/m}$, $\ii=\sqrt{-1}$.
Now, recall that DFT transforms Vandermonde into Cauchy matrices as follows (see \cite{Demmel-99-AccurateSVD}):
\begin{equation}\label{zd:eq:V*DFT}
(\Vanderm_m \DFT)_{ij} = \left[\frac{\lambda_i^m-1}{\sqrt{m}}\right] \left[\frac{1}{\lambda_i -\omega^{1-j}}\right] \left[{\omega^{1-j}}\right]
\equiv (\mathcal{D}_1)_{ii}\; \Cauchy_{ij}\; (\mathcal{D}_2)_{jj},\;1\leq i, j\leq m.
\end{equation}
In other words, $\Vanderm_m \DFT = \mathcal{D}_1 \Cauchy \mathcal{D}_2$ where $\mathcal{D}_1$ and $\mathcal{D}_2$ are diagonal, and $\Cauchy$ is a Cauchy matrix. Note, in addition, that $\mathcal{D}_2=\mathrm{diag}(\omega^{1-j})_{j=1}^m$ is unitary.

To avoid singularity (i.e. an expression of the form $0/0$) if in (\ref{zd:eq:V*DFT}) some $\lambda_i$'s equals an $m$-th root of unity we proceed as follows. If $\lambda_i=\omega^{1-j}$ for some index $j$, 
write $\lambda_i^m - 1 = \prod_{k=1}^m (\lambda_i - \omega^{1-k})$ and replace 
(\ref{zd:eq:V*DFT}) with the equivalent formula for the $i$-th row
\begin{equation}\label{zd:eq:V*DTF-general}
(\Vanderm_m \DFT)_{ij} = \underbrace{\frac{1}{\sqrt{m}}}_{(\mathcal{D}_1)_{ii}} \prod_{\stackrel{k=1}{k\neq j}}^m (\lambda_i - \omega^{1-k}) \underbrace{\omega^{1-j}}_{(\mathcal{D}_2)_{jj}},\;\; (\Vanderm_m \DFT)_{ik}=0\;\;\mbox{for}\;k\neq j.
\end{equation}
If $\varsigma=(\varsigma_1,\ldots,\varsigma_\ell)$ is a subsequence of $(1,\ldots,n)$, and if $\Vanderm_{\varsigma,m}$ is an $\ell\times m$ submatrix of $\Vanderm_m$ consisting of the rows with indices $\varsigma_1, \ldots, \varsigma_\ell$, then (\ref{zd:eq:V*DFT}, \ref{zd:eq:V*DTF-general}) trivially hold for the entries of $\Vanderm_{\varsigma,m} \DFT$.

Note that the matrices $\mathcal{D}_1$, $\Cauchy$,  $\mathcal{D}_2$ are given implicitly by the parameters $\lambda_i$ (eigenvalues, available on input) and the $m$-th roots of unity $\zeta_j = \omega^{1-j}$, $j=1,\ldots, m$ (easily computed to any desired precision and e.g. tabulated in a preprocessing phase of the computation), so that the DFT $\Vanderm_m\DFT$ is not done by actually running an FFT. It suffices to make a  note that the $\lambda_i$'s and the roots of unity are the parameters that define $\Vanderm_m\DFT$ as in (\ref{zd:eq:V*DFT}, \ref{zd:eq:V*DTF-general}).

\begin{remark}\label{zd:REM:Krylov-DFT}
It is interesting to see how this transformation $\Vanderm_m \mapsto \Vanderm_m\DFT$ fits the framework of the Krylov decomposition (\ref{zd:eq:AK=KC+R}). 
Post-multiply (\ref{zd:eq:AK=KC+R}) with $\DFT$ to obtain
\begin{equation}
\A (\X_m \DFT) = (\X_m \DFT)\DFT^* (\Vanderm_m^{-1}\Lambda_m \Vanderm_m)\DFT + r_{m+1}e_m^T \DFT
\end{equation}
and then, using (\ref{zd:eq:V*DFT}), $\DFT^* C_m \DFT = \DFT^*(\Vanderm_m^{-1}\Lambda_m \Vanderm_m)\DFT = \mathcal{D}_2^*\Cauchy^{-1} \mathcal{D}_1^{-1}\Lambda_m \mathcal{D}_1 \Cauchy \mathcal{D}_2 = \mathcal{D}_2^*\Cauchy^{-1}\Lambda_m \Cauchy \mathcal{D}_2$ and 
\begin{eqnarray}
\A (\X_m \DFT) &=& (\X_m\DFT) ((\Cauchy\mathcal{D}_2)^{-1}\Lambda_m (\Cauchy\mathcal{D}_2)) + r_{m+1}e_m^T \DFT,\;\;\mbox{or, equivalently,}  \label{eq:Krylov-DFT-1}\\
\A (\X_m \DFT \mathcal{D}_2^*) &=& (\X_m\DFT \mathcal{D}_2^*) (\Cauchy^{-1}\Lambda_m \Cauchy) + r_{m+1}e_m^T \DFT \mathcal{D}_2^* .\label{eq:Krylov-DFT-2}
\end{eqnarray}
If we think of each row $\X_m(i,:)$ as a time trajectory of the corresponding observable, then $\X_m(i,:)\DFT$ represents its image in the frequency domain, and (\ref{eq:Krylov-DFT-1}, \ref{eq:Krylov-DFT-2}) is the corresponding Krylov decomposition. 

\end{remark}

\subsubsection{Rank revealing (LDU) decomposition of $\mathcal{D}_1 \Cauchy \mathcal{D}_2$}\label{SSS=LDU(Cauchy)}
Applying the DFT to $\Vanderm_m$ in order to avoid the ill--conditioning of $\Vanderm_m$ may seem a futile effort -- since $\DFT$ is unitary, $\kappa_2(\mathcal{D}_1 \Cauchy \mathcal{D}_2)=\kappa_2(\Vanderm_m\DFT)=\kappa_2(\Vanderm_m)$.
Further, Cauchy matrices are also notoriously ill-conditioned, so, in  essence, we have traded one badly conditioned structure to another one. 

\begin{example}\label{EX:Hilb100}
The best known example of ill--conditioned Cauchy matrix  is the Hilbert matrix, $H_{ij}=1/(i+j-1)$.  For instance, the condition number of the $100\times 100$ Hilbert matrix satisfies $\kappa_2(H)>10^{150}$.
Ill-conditioning is not always obvious in the sizes of its entries -- the entries of the $100\times 100$ Hilbert matrix range from $1/199\approx 5.025\cdot 10^{-3}$ to $1$. Moreover, in Matlab, \texttt{cond(hilb(100))} returns \texttt{ans=4.622567959141155e+19}. One should keep in mind that the matrix condition number is a matrix function with its own condition number. By a result of Higham \cite{HIGHAM1995193}, condition number of the condition number is the condition number itself, meaning that our computed condition number, if it is above $1/\roff$ (in Matlab, \texttt{1/eps=4.503599627370496e+15}), it might be entirely wrongly computed. This may lead to an underestimate of extra precision needed to handle the ill--conditioning.	

\end{example}
Although we have not changed the condition number, we have changed the representation of the data, which will allow more accurate computation.
The key numerical advantage of this change of variables is in the fact that for any two diagonal matrices $\mathcal{D}_1$, $\mathcal{D}_2$ and any Cauchy matrix $\Cauchy$, the pivoted LDU decomposition 
\begin{equation}\label{zd:eq:Cauchy-LDU}
\Pi_1 (\mathcal{D}_1 \Cauchy \mathcal{D}_2)\Pi_2 = L \Delta U
\end{equation}
can be computed by a specially tailored algorithm so that all entries of $L$, $\Delta$, $U$, even the tiniest ones, are computed to nearly machine precision accuracy, no matter how high is the condition number of $\mathcal{D}_1 \Cauchy \mathcal{D}_2$. More precisely, if $\widetilde{L}$, $\widetilde{\Delta}$ and $\widetilde{U}$ are the computed matrices, then, for all $i, j$, 
\begin{equation}\label{zd:eq:Cauchy-LDU-error-bound}
|\widetilde{L}_{ij} - L_{ij}| \leq \epsilon |L_{ij}|,\;\;
|\widetilde{\Delta}_{ii} - \Delta_{ii}| \leq \epsilon |\Delta_{ii}|,\;\;
|\widetilde{U}_{ij} - U_{ij}| \leq \epsilon |U_{ij}|,\;\;
\end{equation} 
where $L\Delta U$ is the pivoted LDU decomposition that is computed exactly from the stored parameters.
Essentially, if we take the stored (in the machine memory) eigenvalues $\lambda_i$ and the roots of unity $\omega^{1-j}$ as our initial data, the first errors committed in the floating point  LDU decomposition (\ref{zd:eq:Cauchy-LDU}) are the entry-wise small forward errors (\ref{zd:eq:Cauchy-LDU-error-bound}).
This is achieved by avoiding subtractions of intermediate results and using clever updates of the Schur complements, see \cite{Demmel-99-AccurateSVD}. 

Moreover, as a consequence of pivoting, the matrix $L$ (lower triangular with unit diagonal) and the matrix $U$ (upper triangular with unit diagonal) are well conditioned. All ill-conditioning is conspicuously exposed on the diagonal of $\Delta$. For instance, in the case of the Hilbert matrix from Example \ref{EX:Hilb100}, $\kappa_2(L)=\kappa_2(U)\approx 72.34$ and $\kappa_2(\Delta)\approx 10^{149}$.

\begin{figure}[H]
	\begin{center}
\begin{tikzpicture}
\matrix (m) [matrix of math nodes,row sep=1.5em,column sep=3em,minimum width=2em]
{
            &                  & L & \widetilde{L}=L+\delta L & \\
\boxed{\Vanderm_m}  & \boxed{\Vanderm_m \DFT}  & \Delta & \widetilde{\Delta} =\Delta+\delta \Delta & \boxed{\Pi_1^T \widetilde{L}\widetilde{\Delta}\widetilde{U}\Pi_2^T} \\ \Vanderm_m + E            &                  & U & \widetilde{U}=U+\delta U & \\};
\path[-stealth]
(m-2-1) edge [double] node [below] {$\spadesuit$} (m-2-2) 
(m-2-2) edge node [above] {$\blacktriangle$} (m-1-3) 
(m-2-2) edge node [above] {$\blacktriangle$} (m-2-3) 
(m-2-2) edge node [above] {$\blacktriangle$} (m-3-3) 
(m-1-3) edge node [above] {$\clubsuit$} (m-1-4) 
(m-2-3) edge node [above] {$\clubsuit$} (m-2-4) 
(m-3-3) edge node [above] {$\clubsuit$} (m-3-4) 
(m-1-4) edge node [above] {$\blacktriangledown$} (m-2-5)
(m-2-4) edge node [above] {$\blacktriangledown$} (m-2-5)
(m-3-4) edge node [above] {$\blacktriangledown$} (m-2-5)
(m-2-1) edge node [left] {$\blacklozenge$} (m-3-1);
\end{tikzpicture}
\end{center}
	\caption{Reparametrization of $\Vanderm_m$ via the DFT and LDU in floating point arithmetic. Legend: $\spadesuit$ = the DFT of $\Vanderm_m$ using the explicit formulas (\ref{zd:eq:V*DFT}); $\blacktriangle$ = the pivoted LDU (\ref{zd:eq:Cauchy-LDU}) of $\Vanderm_m\DFT$  by explicit forward stable updates of the Schur complements \cite{Demmel-99-AccurateSVD}; $\clubsuit$ = forward errors in the computed factors $\widetilde{L}$, $\widetilde{\Delta}$, $\widetilde{U}$, bouded as in (\ref{zd:eq:Cauchy-LDU-error-bound}); $\blacktriangledown$ = implicit representation of $\Vanderm_m\DFT$ as the product $\Pi_1^T \widetilde{L}\widetilde{\Delta}\widetilde{U}\Pi_2^T$; $\blacklozenge$ = direct computation with $\Vanderm_m$, using standard algorithms, produces backward error $E$ that is small in matrix norm, and the condition number is $\kappa_2(\Vanderm_m)$.}
\end{figure}

A Matlab implementation of the decomposition (\ref{zd:eq:Cauchy-LDU}) is provided in Algorithm \ref{zd:Alg:VAND-FFT-LDU} of the Appendix (\S \ref{S=Matlab-codes}). For detailed analysis we refer to \cite{Demmel-99-AccurateSVD}, \cite{demmel-koev-2006-PolynVandermonde}.

\subsection{Cauchy matrix based reconstruction (in the frequency domain)}
We now revise the relation (\ref{zd:eq:X-W-V}), and write $\X_m\Vanderm_m^{-1}$ as $(\X_m\DFT) (\Vanderm_m \DFT)^{-1}$. This mere insertion of the identity $\DFT \DFT^{-1}$ between $\X_m$ and $\Vanderm_m^{-1}$ allows a natural interpretation: the time trajectories of the observables (the rows of $\X_m$) have been mapped to the frequency domain, and the inverse Vandermonde matrix of the eigenvectors of $C_m$ has been changed with the inverse of the generalized Cauchy matrix $\mathcal{D}_1\Cauchy \mathcal{D}_2$ (the eigenvectors of $\DFT^* C_m\DFT$); see Remark \ref{zd:REM:Krylov-DFT}.


Following \S \ref{SSS=LDU(Cauchy)}, compute the LDU decomposition with complete pivoting $\Pi_1 (\mathcal{D}_1 \Cauchy \mathcal{D}_2)\Pi_2 = L \Delta U$, and then apply $\Vanderm_m^{-1}$ through backward and forward substitutions, 
\begin{equation}\label{zd:eq:W-via-DFT}
\widehat{W}_m = (((((\X_m\DFT) \Pi_2) U^{-1})\Delta^{-1}) L^{-1})\Pi_1 . 
\end{equation}
The implementation of this formula depends on a particular software tool. For the reader's convenience, in Algorithm \ref{zd:Alg:inv(V)-via-DFT} we show a simple Matlab version,\footnote{Note that we have adapted post-multiplication by $\DFT$ to the Matlab's definition of the functions \texttt{fft()}, \texttt{ifft()}.} where the function \texttt{Vand\_DFT\_LDU()} computes the decomposition (\ref{zd:eq:Cauchy-LDU}).

\noindent This is obviously more complicated than ``backslashing'' in Matlab (i.e. $\widehat{W}_m=\X_m/\Vanderm_m$) and not as efficient as the fast Vandermonde inversion techniques such as the Bj\"{o}rck-Pereyra type algorithms that solve single Vandermonde system with $O(m^2)$ complexity. 
But, Algorithm \ref{zd:Alg:inv(V)-via-DFT}, when combined with Algorithm \ref{zd:Alg:VAND-FFT-LDU}, provides superior accuracy, independent of the distribution of the $\lambda_i$'s.

Besides ill--conditioning, one general difficulty with Vandermonde matrices is that the powers $\lambda_i^j$ may spread, in absolute value, hundreds of orders of magnitude and that underflows and overflows may cause irreparable exceptions in machine arithmetic. An interesting salient feature of the transformation (\ref{zd:eq:V*DFT}) is that the powers of $\lambda_i$ are changed into $1/(\lambda_i - \omega^{j-1})$ where the only powers are those of the primitive $m$th root of unity $\omega$; the powers of $\lambda_i$ are extracted in the form ${(\lambda_i^m-1)}/{\sqrt{m}}$ on the diagonal of the scaling matrix $\mathcal{D}_1$, where they can be kept until the very end of the computation. Namely, since our goal is to compute $\widehat{W}_m=\X_m \Vanderm_m^{-1}$ and its column norms, we can rephrase the previous formulas as  
$$
\widehat {W}_m = \left[ (\X_m \DFT) (\Cauchy\mathcal{D}_2)^{-1}\right] \mathcal{D}_1^{-1}
$$ 
and compute $\widetilde{W}_m = \left[ (\X_m \DFT) (\Cauchy\mathcal{D}_2)^{-1}\right]$ by using the LDU of $\Cauchy\mathcal{D}_2$, instead of (\ref{zd:eq:Cauchy-LDU}), and the forward and backward substitutions analogously to (\ref{zd:eq:W-via-DFT}). The diagonal entries of $\mathcal{D}_1^{-1}$ will then be assimilated as multiplicative factors in the
computation of $\mathfrak{a}_j$, see (\ref{zd:eq:Wm}).
(Analogously, we can write $\widetilde{W}_m=(\X_m \DFT \mathcal{D}_2^*) \Cauchy^{-1}$ and invert $\mathcal{\Cauchy}$ via its pivoted LDU, but this makes not a big difference since $\mathcal{D}_2$ is unitary.)
This modification can be easily implemented by minor modifications in Algorithm \ref{zd:Alg:inv(V)-via-DFT} and \ref{zd:Alg:VAND-FFT-LDU}; we omit the details for the sake of brevity.


\subsubsection{SVD based reconstruction.  Regularization}
Furthermore, based on the decomposition (\ref{zd:eq:Cauchy-LDU}), computed to high accuracy (\ref{zd:eq:Cauchy-LDU-error-bound}), we can compute accurate SVD of the product $L\Delta U$, $L\Delta U = \Omega \Sigma\Theta^*$, which gives an accurate SVD of $\Vanderm_m$,
$
\Vanderm_m = (\Pi_1^T \Omega) \Sigma (\DFT\Pi_2\Theta)^* \equiv \mathcal{U}\Sigma \mathcal{V}^*.
$
For details of the algorithm we refer to \cite{drm-98-psvd}, \cite{dgesvd-99}, \cite{Demmel-99-AccurateSVD}, \cite{drm-ves-VW-1}, \cite{drm-ves-VW-2}.

Now, instead of computing $\X_m\Vanderm_m^{-1}$ as a solution of linear system of equation, with the inverse $\Vanderm_m^{-1}=\mathcal{V}\Sigma^{-1}\mathcal{U}^*$, we change the framework into a regularized LS solution.
Let $\varphi_j\geq 0$ be filter factors, and 
\begin{equation}
\Sigma_\varphi^\dagger = \mathrm{diag}(\varphi_1 \frac{1}{\sigma_1},\ldots, \varphi_m\frac{1}{\sigma_m}).
\end{equation}
For instance, the commonly used regularization is
\begin{equation}
\Sigma_\varphi^\dagger = \mathrm{diag}(\frac{\sigma_1}{\sigma_1^2+\eta^2},\ldots, \frac{\sigma_m}{\sigma_m^2+\eta^2}),\;\; \mbox{where}\;\;\varphi_i = \frac{\sigma_i^2}{\sigma_i^2+\eta^2}
\end{equation}
Then we use the approximation
\begin{equation}
\X_m\Vanderm_m^{-1} \approx \X_m \mathcal{V}\Sigma_\varphi^\dagger\mathcal{U}^* = (\X_m\DFT\Pi_2) \Theta \Sigma_\varphi^\dagger\mathcal{U}^* \equiv (\X_m\DFT\Pi_2) \Theta \Sigma_\varphi^\dagger \Omega^* \Pi_1 .
\end{equation}
The key is that we have an accurate SVD even with extremely large $\sigma_1/\sigma_m$, and that with the tuning parameter $\eta\geq 0$ we can control the influence of the smallest singular values. This may be important in the case of noisy data -- an issue not considered in this paper.

%
%
%
%

\section{Numerical example: a case study}\label{SS:reconstruct:Num-Example}
To illustrate the preceding discussion, we use the simulation data of a 2D model obtained by depth averaging the Navier--Stokes equations for a shear flow in a thin layer of electrolyte suspended on a thin lubricating layer of a dielectric fluid; see \cite{Tithof-2017JFM-Bif-Q2D-Kolmogorov}, \cite{Suri-2014PhFl-Velocity-Kolmogorov-flow} for more detailed description of the experimental setup and numerical simulations.\footnote{We thank Michael Schatz, Balachandra Suri, Roman Grigoriev and Logan Kageorge from the Georgia Institute of Technology for providing us with the data.}

The (scalar) vorticity field data consists of $n_t$ snapshots of dimensions $n_x\times n_y$; in this particular example $n_t=1201\equiv m+1$, $n_x=n_y=128$.  The $n_x\times n_y \times n_t$ tensor is matricized into $n_x\cdot n_y \times n_t$ matrix $(\f_1,\ldots, \f_{n_t})$, and $\X_m$ is of dimensions $16384\times 1200$. 

The computational schemes are tested with respect to reconstruction potential as follows: for a given snapshot $\f_i$, the representation (\ref{zd:eq:f_i-dmd}) is truncated by taking given number of modes with absolutely largest amplitudes $|\mathfrak{a}_j|$ (abbreviated as \emph{dominant modes}). We note that, in general, selecting most appropriate modes is a separate nontrivial problem, not considered here. 

\subsection{Ill-conditioning of $\Vanderm_m$ and diagonal scalings}\label{SS=Numex-scaling}
In the first experiment, we illustrate the problem caused by high condition number of $\Vanderm_m$, and also the effects of simple diagonal scalings. 
More precisely, we attempt reconstruction of the snapshots as outlined in \S \ref{zd:SSS:modal-repr-V}, using the modes computed by
\begin{enumerate}
	\itemsep0em
 \item inversion of the Vandermonde matrix by the backslash operator in Matlab ;
\item  inversion of the row scaled Vandermonde matrix by the backslash operator in Matlab: $\Vanderm_m = D_r \Vanderm_m^{(r)}$, $\widehat{W}_m = (\X_m (\Vanderm_m^{(r)})^{-1}) D_r^{-1}$, where $D_r=\mathrm{diag}(\|\Vanderm_m(i,:)\|)_{i=1}^m$ ;
\item inversion of the column scaled Vandermonde matrix by the backslash operator in Matlab: $\Vanderm_m = \Vanderm_m^{(c)} D_c$, $\widehat{W}_m = (\X_m D_c^{-1})(\Vanderm_m^{(c)})^{-1}$, where $D_c=\mathrm{diag}(\|\Vanderm_m(:,i)\|)_{i=1}^m$.
\end{enumerate}
We choose the scaling in the $\ell_2$ norm, i.e. $\|\cdot\|=\|\cdot\|_2$.
The relevant condition numbers, estimated using the Matlab's function \texttt{cond()} are as follows
\begin{equation}\label{zd:eq:V-VR-VC-conditions}
\texttt{cond}(\Vanderm_m) \approx 8.9\cdot 10^{76},\;\;
\texttt{cond}(\Vanderm_m^{(r)}) \approx 3.1\cdot 10^{7},\;\;
\texttt{cond}(\Vanderm_m^{(c)}) \approx 3.0\cdot {10^{21}}.
\end{equation}
We can also scale in $\|\cdot\|_\infty$ or $\|\cdot\|_1$ norm, with similar effect as in (\ref{zd:eq:V-VR-VC-conditions}). (It is known that this scaling that equilibrates the columns, or rows, is nearly optimal in the class of all diagonal scalings, see \cite{slu-69}.) This is instructive; we see that the condition number of $\Vanderm_m$ can indeed be high, much higher than $1/\roff$, and that certain diagonal scaling may reduce it enough to allow sufficiently accurate computations in machine working precision $\roff$ (here assumed to sixteen decimal digits).

In Figure \ref{zd:fig:REC-321-300-VRS}, we display reconstruction results for $\f_{321}$, using  $300$ dominant  modes. The snapshots are visualized using the contour plots over the 2D domain.
The results are consistent with (\ref{zd:eq:V-VR-VC-conditions}) -- with the condition number above $1/\roff$ we do not expect any reasonable accuracy. 

The key for the success of row scaling of $\Vanderm_m$ is that in this case the distribution of the Ritz values $\lambda_i$ is such that it allows improvement of the condition number by row scaling. Although this will not be the case in general, it provides a good case study example for numerical analysts. It should be noted that $\Vanderm_m^{-1} = (\Vanderm_m^{(r)})^{-1} D_r^{-1}$ ($\widehat{W}_m = (\X_m (\Vanderm_m^{(r)})^{-1}) D_r^{-1}$) is just a different scaling of the eigenvectors of $C_m$ (Ritz vectors of $\A$), that is compensated by the corresponding scaling of the amplitudes; this scaling is as an allowable transformation -- an invariant of the representation. 

On the other hand, the column scaling of $\Vanderm_m$ did not reduce the condition number enough to ensure accurate inversion, although the reduction was by more than fifty orders of magnitude. Also, this scaling is not interpretable as an invariant of the reconstruction; note that it rescales the snapshots.

It is interesting that in the case of taking all or almost all modes, the reconstruction by simply backslashing $\Vanderm_m$ provides perfect reconstruction, see Figure \ref{zd:fig:REC-321-1199-VRS}. (This can be explained by the fact that even an inaccurate solution of a linear system of equations may have small residual.)
\begin{figure}
	\begin{minipage}[c]{0.70\textwidth}
		\includegraphics[width=\textwidth, height=0.90\textwidth]{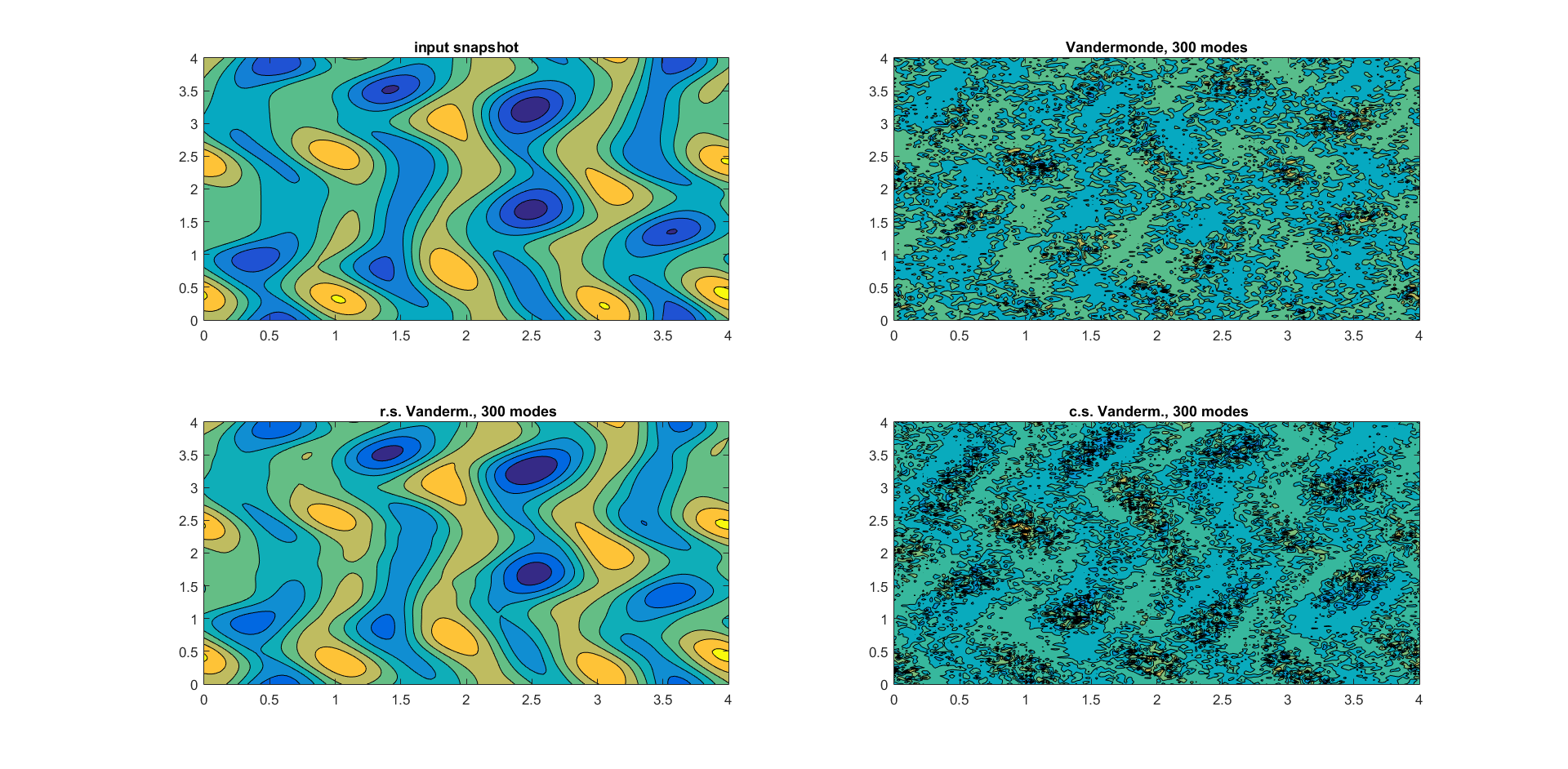}
	\end{minipage}\hfill
	\begin{minipage}[c]{0.30\textwidth}
		\caption{
\label{zd:fig:REC-321-300-VRS} Reconstruction of $\f_{321}$ using $300$ dominant modes. The linear systems are solved in Matlab using the backslash operator. Note how using the row scaled $\Vanderm_m^{(r)}$ improves the reconstruction (first plot in the second row), while backslashing the original $\Vanderm_m$ and the column scaled matrix $\Vanderm_m^{(c)}$  yields poor results (plots in the second column on the Figure). Similar effect is observed when reconstructing the other $\f_i$'s.
		} 
	\end{minipage}
\end{figure}
\begin{figure}[H]
	\begin{minipage}[c]{0.70\textwidth}
		\includegraphics[width=\textwidth, height=0.90\textwidth]{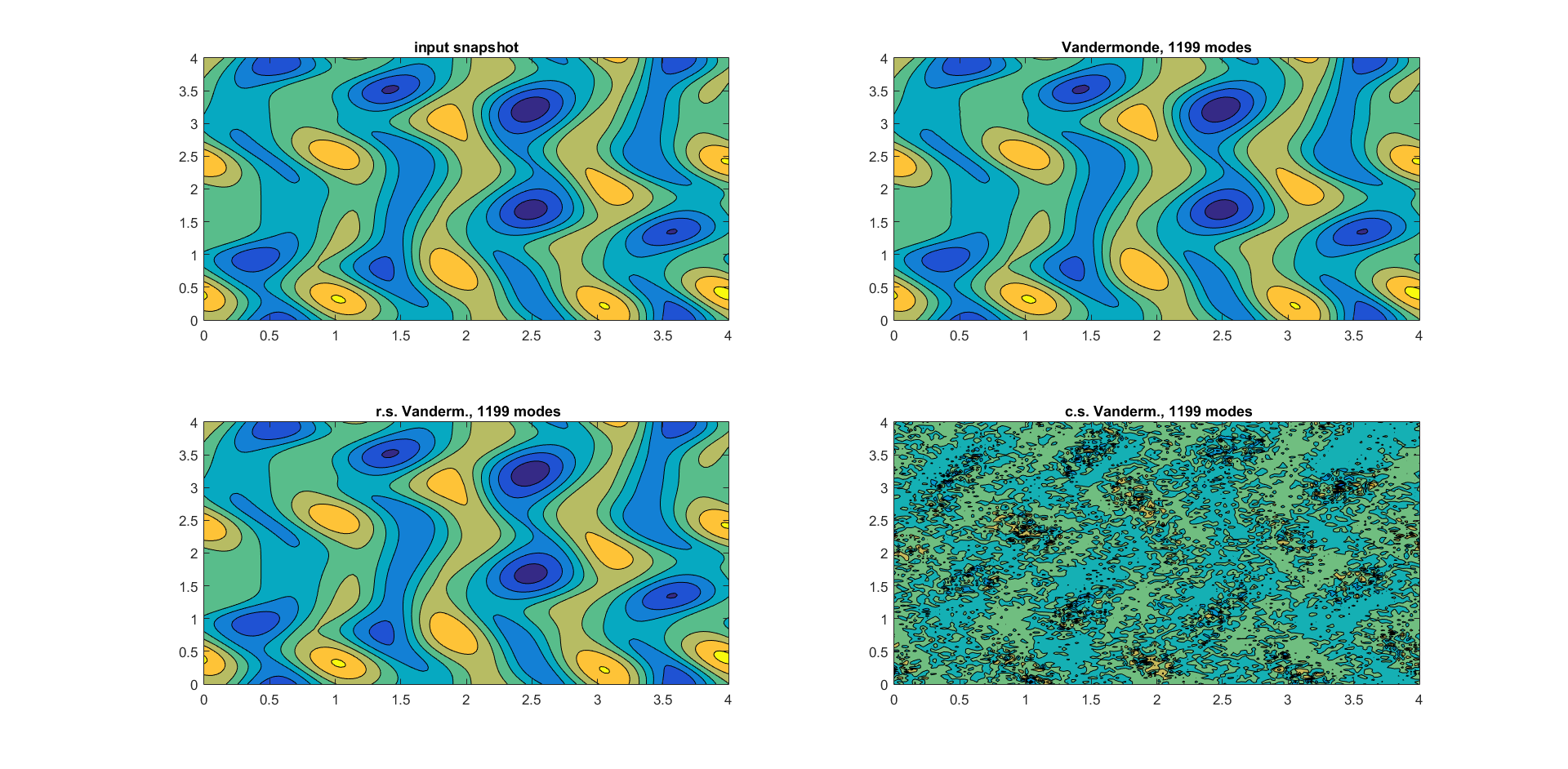}
	\end{minipage}\hfill
	\begin{minipage}[c]{0.30\textwidth}
		\caption{
			\label{zd:fig:REC-321-1199-VRS}  Reconstruction of $\f_{321}$ using $1199$ dominant modes. The linear systems are solved in Matlab using the backslash operator. Note that backslashing the original $\Vanderm_m$ and using larger number of modes resulted in good reconstruction.
		} 
	\end{minipage}
\end{figure}
%

For an analysis of optimally selected real nodes and the potential improvement of the condition number of Vandermonde matrices  by scaling we refer to \cite{Gautschi2011} with a \emph{caveat -- our nodes are complex and we have no luxury of selecting them}.  Our ultimate goal is to develop accurate computation (to the extent deemed possible by the perturbation theory) of $\widehat{W}_m = \X_m \Vanderm_m^{-1}$ independent of the distribution of the $\lambda_i$'s and independent of the range of their moduli, including the case of $\max_i|\lambda_i|/\min_i|\lambda_i|\gg 1$. \\

\subsection{Comparing Bj\"{o}rck-Pereyera, DMD and DFT based reconstruction}\label{SS=Numex-DMD-DFT}
Next, we do reconstruction using the following three methods:
\begin{enumerate}
\itemsep0em
\item companion matrix formulation with the Bj\"{o}rck-Pereyera method \cite{BjoP70} for Vandermonde systems. Although forward stable in the special case of real and ordered $\lambda_i$'s, this method may be very sensitive in the case of general complex $\lambda_i$'s and relatively large dimension $m$.
\item companion matrix formulation with the DFT and inversion of the Cauchy matrix, as described in \S \ref{zd:SS=DFT+Cauchy-solver}. Since $\DFT$ and $\mathcal{D}_2$ in (\ref{zd:eq:V*DFT}) are unitary, 
Algorithm \ref{zd:Alg:inv(V)-via-DFT} solves linear system with the matrix $\mathcal{D}_1\Cauchy = \Vanderm_m \DFT \mathcal{D}_2^*$ of condition number bigger than $10^{76}$. No additional scaling is used; we want to check the claim that such high condition number cannot spoil the result.
\item Schmid's DMD method as described in \S \ref{SS=Reconstruct_DMD}. Here we expect good reconstruction results, provided it is feasible for given data and the parameters. The SVD is not truncated because the spectral condition number of $\X_m$ is approximately $5.5\cdot 10^{10}$ ($\sigma_{\max}(\X_m)\approx 4.2\cdot 10^3$, $\sigma_{\min}(\X_m)\approx 7.5\cdot 10^{-8}$).
\end{enumerate}
The reconstruction results for $\f_{321}$, shown on Figure \ref{zd:fig:REC-321-30-ALL}, indicate that row scaled $\Vanderm_m$ based computation, as well as  DFT based and the Schmids' DMD are capable of reconstructing the snapshots with relatively small number of dominant modes.

\begin{figure}[H]
	\begin{minipage}[c]{0.70\textwidth}
		\includegraphics[width=\textwidth, height=0.90\textwidth]{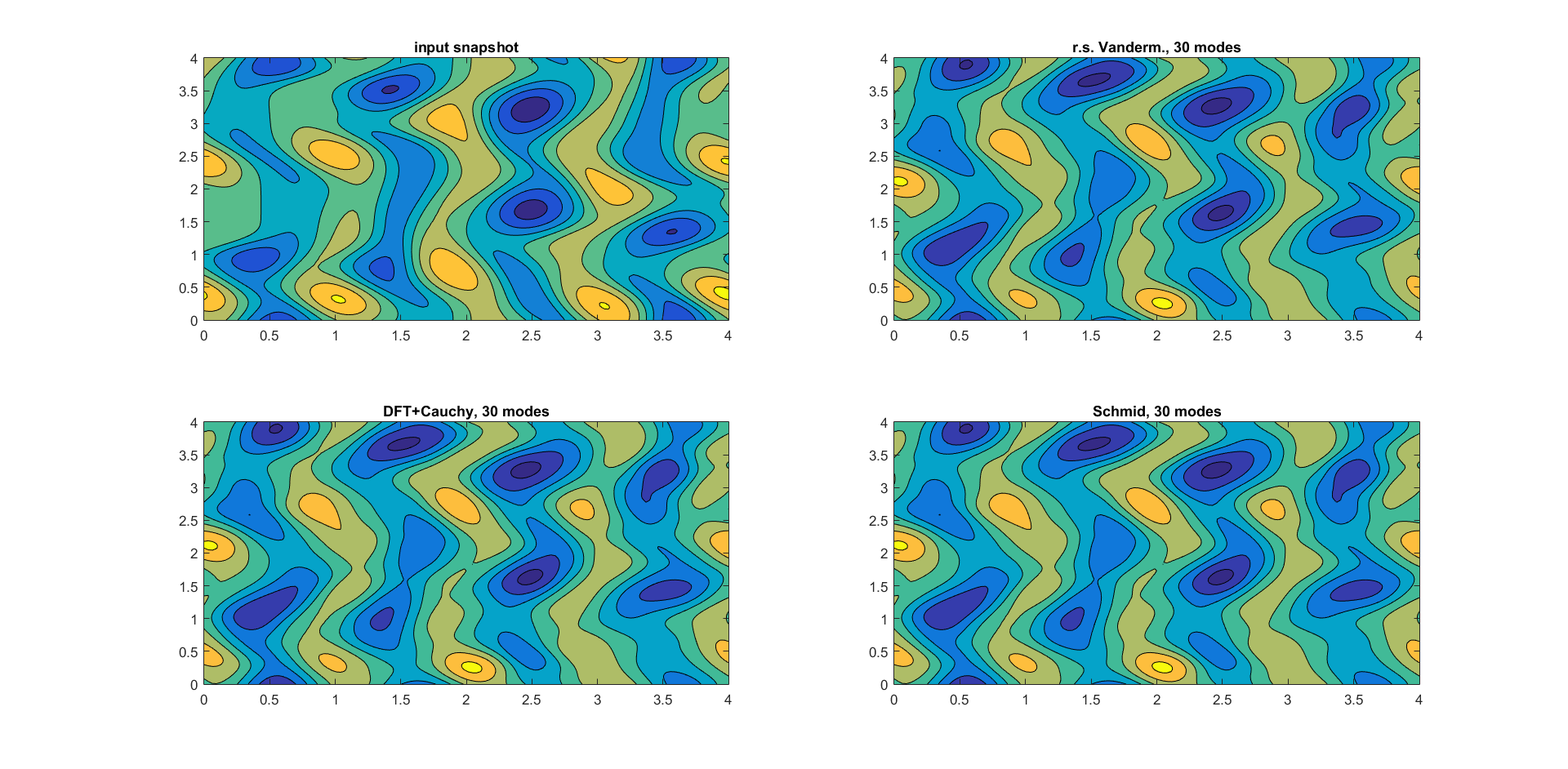}
	\end{minipage}\hfill
	\begin{minipage}[c]{0.30\textwidth}
		\caption{\label{zd:fig:REC-321-30-ALL} Reconstruction of $\f_{321}$ using $30$ dominant modes. The row scaled Vanermonde and the DFT+Cauchy inversion, as well as the Schmid's DMD reconstruction (second row) succeeded in reconstructing $\f_{321}$ using $30$ modes with dominant amplitudes.
		} 
	\end{minipage}
\end{figure}

The  Bj\"{o}rck-Pereyera method was not successful, and increasing the number of modes brings no improvement, see Figure \ref{zd:fig:REC-321-300-CSBP}.

\begin{figure}[H]
	\begin{minipage}[c]{0.70\textwidth}
		\includegraphics[width=\textwidth, height=0.90\textwidth]{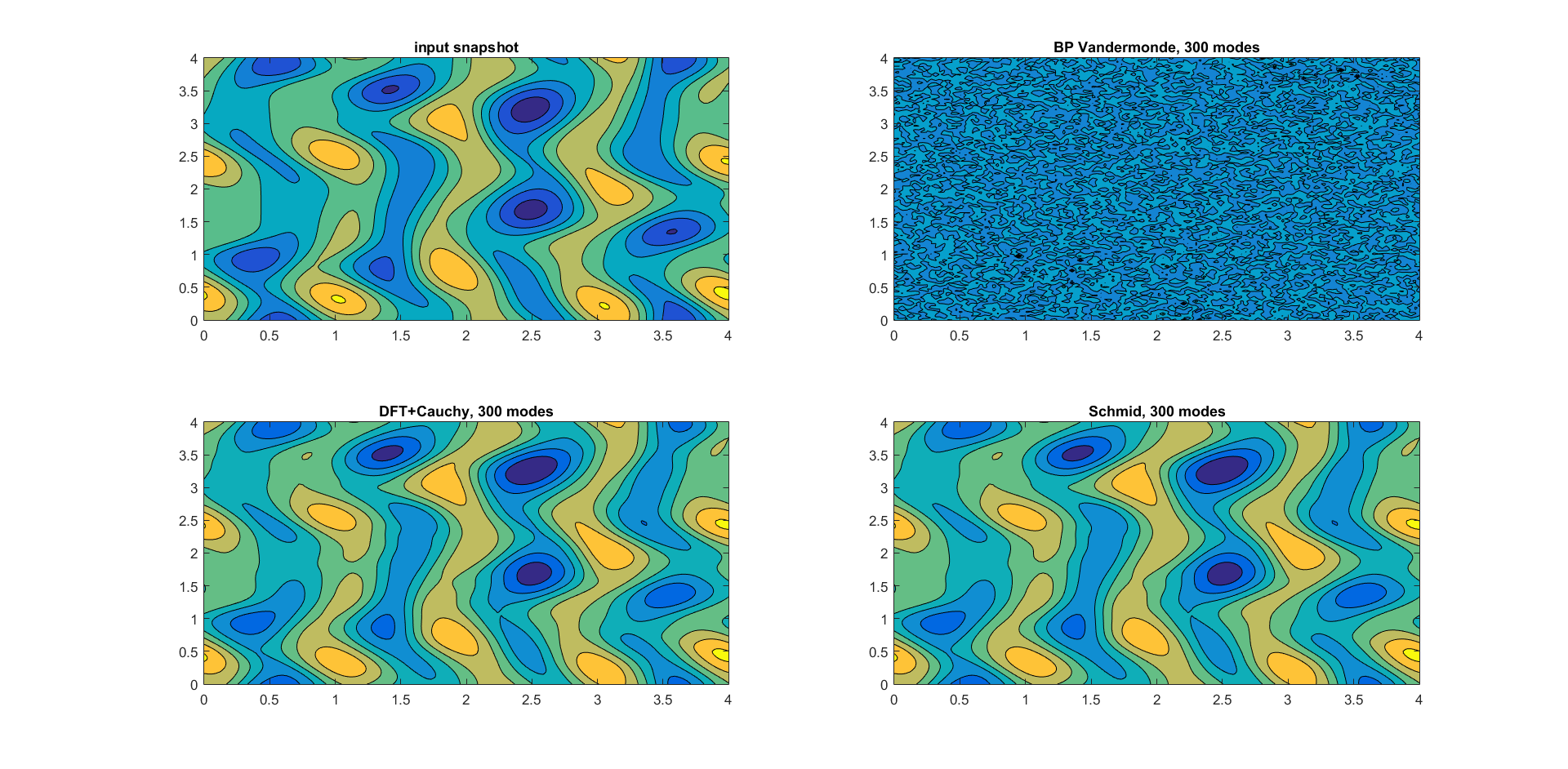}
	\end{minipage}\hfill
	\begin{minipage}[c]{0.30\textwidth}
		\caption{\label{zd:fig:REC-321-300-CSBP} Reconstruction of $\f_{321}$ using $300$ dominant modes. Bj\"{o}rck-Pereyera method (second plot in the first row) failed to produce any useful data. The DFT+Cauchy inversion and the Schmid's DMD reconstruction (second row) succeeded in reconstructing $\f_{321}$ pretty much using $300$ modes with dominant amplitudes.
		} 
	\end{minipage}
\end{figure}
Increasing the number of modes in the reconstruction further may or may not not bring improvement, as seen in Figure \ref{zd:fig:REC-1111-1075}. Ar first, it may come as a surprise that the SVD based DMD starts failing after taking more and more nodes. On the other hand, we must take into account that the left singular vectors of the matrix $\X_m$ as well as the eigenvectors of the Rayleigh quotient $S_k$ (lines 1. and 5. in the DMD Algorithm \ref{zd:ALG:DMD}) may be computed inaccurately as their sensitivity depends on the condition numbers of $\X_m$ and $S_k$ (line 4.) and the gaps in the singular values and the eigenvalues, respectively.

\begin{figure}[H]
	\begin{minipage}[c]{0.70\textwidth}
		\includegraphics[width=\textwidth, height=0.90\textwidth]{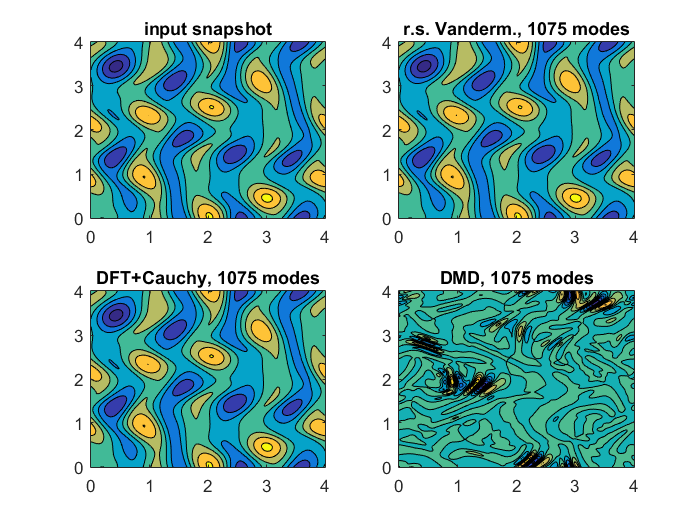}
	\end{minipage}\hfill
	\begin{minipage}[c]{0.30\textwidth}
		\caption{\label{zd:fig:REC-1111-1075} Reconstruction of $\f_{1111}$ using $1075$ dominant modes.  The DFT+Cauchy inversion did well, and the Schmid's DMD reconstruction (second row) unfortunately failed. (With $1074$ modes DMD performed well, but starting with $1075$ all reconstruction failed, including the one with all $1200$ modes.)
		} 
	\end{minipage}
\end{figure}

\begin{remark}
The result shown in Figure \ref{zd:fig:REC-1111-1075}, showing  unsuccessful reconstruction by the SVD based DMD starting at mode $1075$ seems surprising. On the other hand, in DMD, the modes are computed from the product of the left singular vector matrix of the snapshots and the eigenvectors of the Rayleigh quotient; both sets of vectors are sensitive to small gaps in the spectrum and may not be computed to high accuracy; recall Remark \ref{zd:REM:eig-vec-sens}. For the sensitivity of the singular vectors see \cite{RC-Li-Pert-Th-II-1998}. 	
\end{remark}


\subsection{Reconstruction in the QR compressed space}
	An efficient method of reducing the dimension of the ambient space is the QR--compressed scheme \cite[\S 5.2.1]{DDMD-SISC-2018}, which reduces $n$ to $m+1$
	via the QR factorization of $\X_{m+1} = ( \X_m , \f_{m+1})$. We briefly review the main idea; for more details and the general case of non-sequential data $\X_m$ and $\Y_m=\A\X_m$ we refer the reader to \cite{DDMD-SISC-2018}.
	
	In the QR factorization
	\begin{equation}\label{zd:eq:QRFm+1}
	\X_{m+1} = Q_f \begin{pmatrix} R_f \cr 0\end{pmatrix} = \widehat{Q}_f R_f,\;\;Q_f^* Q_f = I_n, \;\;\widehat{Q}_f=Q_f(:,1:m+1),
	\end{equation}
	where $R_f$ is $(m+1)\times (m+1)$ upper triangular, it holds that 
	 $\mathrm{range}(\widehat{Q}_f)\supseteq \mathrm{range}(\X_{m+1})$. (Clearly, $\mathrm{range}(\widehat{Q}_f)= \mathrm{range}(\X_{m+1})$ if and only if $\X_{m+1}$ is of full column rank.) Hence, we can reparametrize the data, and work in the new representation in the basis defined by the columns of $\widehat{Q}_f$. To that end, set $\Y_m=\A\X_m = (\f_2,\ldots, f_{m+1})$, $R_x=R_f(:,1:m)$, $R_y=R_f(:,2:m+1)$. Then 
	\begin{equation}\label{zd:eq:FXY-compress}
	\X_m = \widehat{Q}_f R_x,\;\;\Y_m =\widehat{Q}_f R_y \; ;\;\;
	R_f = \left( \begin{smallmatrix} \times & \divideontimes & \divideontimes & \divideontimes & \div \cr & \divideontimes & \divideontimes & \divideontimes & \div \cr 
	&  & \divideontimes & \divideontimes & \div \cr 
	&   &  & \divideontimes & \div \cr
	&   &   &   & \div\end{smallmatrix}\right),\;\;
	R_x = \left( \begin{smallmatrix} 
	\times & \divideontimes & \divideontimes & \divideontimes \cr 
	& \divideontimes & \divideontimes & \divideontimes \cr
	&        & \divideontimes & \divideontimes \cr
	&        &        & \divideontimes \cr
	&        &        &   0
	\end{smallmatrix}\right),\;\;
	R_y = \left(\begin{smallmatrix} 
	\divideontimes & \divideontimes & \divideontimes & \div \cr
	\divideontimes & \divideontimes & \divideontimes & \div \cr
	& \divideontimes & \divideontimes & \div \cr
	&      & \divideontimes & \div \cr
	&      &      & \div
	\end{smallmatrix}\right) ,
	\end{equation}
	and, in the basis of the columns of $\widehat{Q}_f$, we can identify $\X_m\equiv R_x$, $\Y_m\equiv R_y$, i.e. we can think of $\X_m$ and $\Y_m$ as $m$ snapshots in an $(m+1)$ dimensional space. 

	Further, the coefficients $\cb=(c_i)_{i=1}^m$ of the companion matrix $C_m$ are computed as
	$$
	\cb = R_x^\dagger R_y(:,m) = R_x(1:m,1:m)^{-1}R_y(1:m,m) \equiv
	R_f(1:m,1:m)^{-1}R_f(1:m,m+1),
	$$
	where the explicit use of $R_x^{-1}$ assumes the full rank case. Numerically, we would solve the LS problem $\| R_x \cb - R_y(:,m)\|_2\rightarrow\min$ by an appropriate method (including refinement).

In this representation, if we set $\widehat{\f}_i\equiv R_x(:,i)$, and if we have a computed reconstruction $\widetilde{\widehat{\f}}_i \approx \widehat{\f}_i$, then $\widehat{Q}_f \widetilde{\widehat{\f}}_i \approx \f_i$, with the error
$
\| \widehat{Q}_f \widetilde{\widehat{\f}}_i - \widehat{Q}_f \widehat{\f}_i\|_2 = 
\|\widetilde{\widehat{\f}}_i -  \widehat{\f}_i\|_2
$. Hence we can run the entire process (including the initial DMD) in the basis $\widehat{Q}_f$, and switch to the original representation at the very end. 
Since the factorization (\ref{zd:eq:QRFm+1}) is available in high performance software implementations, the overall procedure is more efficient in particular in the cases when $m\ll n$. 

Let us now from try this scheme using the test data from \S \ref{SS=Numex-scaling} and \S \ref{SS=Numex-DMD-DFT}. As expected, the compressed scheme computes considerably faster. The numerical results are similar, with some minor variations. To illustrate, we again reconstruct $\f_{321}$ and $\f_{1111}$.

 The results shown in Figure \ref{zd:fig:REC-321-300-VRS-QR}, when compared to Figure \ref{zd:fig:REC-321-300-VRS}, seem to indicate that pure Vandermonde inversion, while still performing poorly,  shows some improvement. This is probably due to the fact that the Vandermonde inverse is applied to a triangular matrix; we omit the discussion for the sake of brevity. (The DMD and the DFT+Cauchy algorithms produced results similar to the ones in Figure \ref{zd:fig:REC-321-30-ALL}.)

\begin{figure}[H]
	\begin{minipage}[c]{0.70\textwidth}
		\includegraphics[width=\textwidth, height=0.90\textwidth]{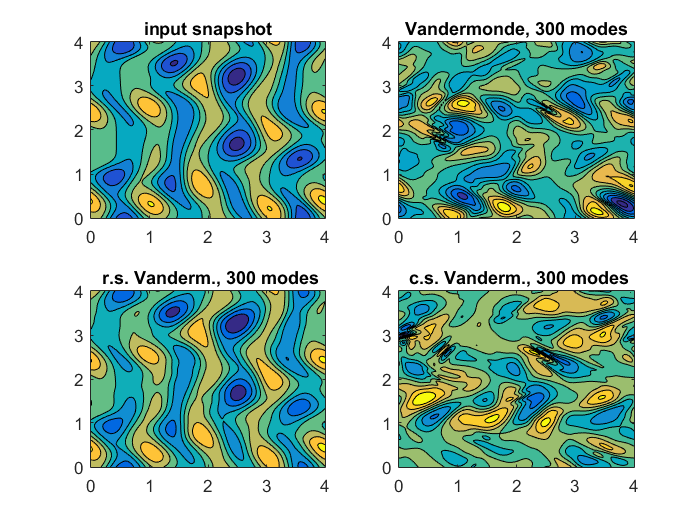}
	\end{minipage}\hfill
	\begin{minipage}[c]{0.30\textwidth}
		\caption{\label{zd:fig:REC-321-300-VRS-QR} Reconstruction of $\f_{321}$ using $300$ dominant modes in the basis $\widehat{Q}_f$. Compare with Figure \ref{zd:fig:REC-321-300-VRS}.  Similar effect is observed when reconstructing the other $\f_i$'s.
		} 
	\end{minipage}
\end{figure}

With the snapshot $\f_{1111}$ (see Figure \ref{zd:fig:REC-1111-1075}), the QR compressed DMD started failing at the number of $1073$ dominant modes; the DFT+Cauchy method performed well, as before.

\section{Snapshot reconstruction: theoretical insights}\label{S=Snapshot-rec-theory}
\noindent It is desirable to have a representation analogous to (\ref{eq:reconstruction-formula}), but with smaller number of eigenmodes and with as small as possible representation error. Suppose that we use $\ell$ modes, $\ell < m$, and that the Ritz pairs are so enumerated that the selected ones are $(\lambda_1, z_1), \ldots, (\lambda_\ell, z_\ell)$. We do not specify how the pairs $(\lambda_i,z_i)$ have been  computed -- the $z_i$'s can be the Ritz vectors or e.g. the refined Ritz vectors \cite{DDMD-SISC-2018}. The selection of the particular  $\ell$ pairs can be guided e.g. by the sparsity promoting DMD \cite{jovschnicPOF14}.
Wanted are the coefficients $\alpha_1, \ldots, \alpha_{\ell}$ that minimize the least squares error over all snapshots:
\begin{equation}\label{eq:rec-error-min}
\sum_{i=1}^m \| \f_i - \sum_{j=1}^\ell z_j \alpha_j \lambda_j^{i-1}\|_2^2 \longrightarrow \min .
\end{equation}
If we set $Z_\ell = (z_1 , \ldots, z_\ell)$, $\vec{\bfalpha}=(\alpha_1,\ldots, \alpha_\ell)^T$, $\Delta_\bfalpha=\mathrm{diag}(\vec{\bfalpha})$, $\Lambda_j=(\lambda_1^{j-1}, \ldots, \lambda_\ell^{j-1})^T$, and $\Delta_{\Lambda_j}=\mathrm{diag}(\Lambda_j)$
then the objective (\ref{eq:rec-error-min}) can be written as 
\begin{equation}\label{eq:Omega(alpha)}
\Omega^2({\bfalpha}) \equiv \|  \X_m - Z_{\ell} \Delta_{\bfalpha} \begin{pmatrix} \Lambda_1 & \Lambda_2 & \ldots & \Lambda_{m}\end{pmatrix} \|_F^2 \longrightarrow\min.
\end{equation}
If we compute the economy size (tall) QR factorization $Z_\ell = Q R$ and define projected snapshots $\g_i = Q^* \f_i$, then the LS problem can be compactly written as
\begin{equation}\label{zd:eq:g-S*alpha}
\|  \vec{\g} -
S \vec{\bfalpha} \|_2\longrightarrow\min,\;\;\mbox{where}\;\;\vec{\g}= \begin{pmatrix} \g_1 \cr \vdots \cr \g_m\end{pmatrix},\;\; S = (I_m\otimes R) \begin{pmatrix} 
\Delta_{\Lambda_1} \cr \vdots \cr  \Delta_{\Lambda_m}\end{pmatrix} \equiv 
\begin{pmatrix} 
R \Delta_{\Lambda_1} \cr \vdots \cr  R \Delta_{\Lambda_m}\end{pmatrix} .
\end{equation}
The  normal equations approach \cite{jovschnicPOF14} allows efficient computation of $\vec{\bfalpha}=S^\dagger \vec{\g}$ due to the particular structure of $S$. 
Recently, \cite{DMD-LS-reconstruct-2018} proposed a corrected semi-normal solution and a QR factorization based method. 
 
In this section, our interests are of theoretical nature. We explore other choices of the pseudo--inverse based solution, different from the Moore--Penrose solution $S^\dagger \vec{\g}$, in an attempt to establish a connection between the numerical linear algebra framework and the GLA.

\subsection{Minimization in a $Z_\ell$--induced norm}
\noindent The  reflexive g--inverse \cite[Definition 2.4]{rao-mitra-1972} of $S$,
$$
S^{-} = \begin{pmatrix} 
\Delta_{\Lambda_1} \cr \vdots \cr  \Delta_{\Lambda_m}\end{pmatrix}^{\dagger}(I\otimes R^{-1}),
$$
allows interesting explicit formulas that reveal a  relation to the GLA \cite{Mohr:2014wm,Mezic:2013ei}.
Indeed, if we define $\vec{\bfalpha}_\star = S^-\vec{\g}$, then:
\begin{eqnarray}
\vec{\bfalpha}_\star &=& \begin{pmatrix} 
\Delta_{\Lambda_1} \cr \vdots \cr  \Delta_{\Lambda_m}\end{pmatrix}^{\dagger}(I\otimes R^{-1})\begin{pmatrix} \g_1 \cr \vdots \cr \g_m\end{pmatrix}= (\sum_{k=1}^m \Delta_{\Lambda_k}^* \Delta_{\Lambda_k} )^{-1} \sum_{i=1}^m \Delta_{\Lambda_i}^* (R^{-1}\g_i) \label{eq:S-*g}\\
&=&
\left(\begin{smallmatrix} \sum_{k=1}^m |\lambda_1|^{2(k-1)} & 0 & \ldots & 0 \cr
0 & \sum_{k=1}^m |\lambda_2|^{2(k-1)} & \ddots & \vdots \cr
\vdots & \ddots & \ddots & 0 \cr
0 & \ldots & 0 & \sum_{k=1}^m |\lambda_{\ell}|^{2(k-1)}\end{smallmatrix}\right)^{-1} 
\sum_{i=1}^m \Delta_{\Lambda_i}^* (R^{-1}\g_i) \\
&=& \sum_{i=1}^m \begin{pmatrix} \frac{\overline{\lambda}_1^{i-1}}{\sum_{k=1}^m |\lambda_1|^{2(k-1)}} & 0 & \cdot & 0 \cr 
0 & \frac{\overline{\lambda}_2^{i-1}}{\sum_{k=1}^m |\lambda_2|^{2(k-1)}} & \cdot & \cdot \cr
\cdot & \cdot & \cdot & 0 \cr
0 & \cdot & 0 & \frac{\overline{\lambda}_{\ell}^{i-1}}{\sum_{k=1}^m |\lambda_{\ell}|^{2(k-1)}}\end{pmatrix} \begin{pmatrix} (R^{-1}\g_i)_1 \cr 
(R^{-1}\g_i)_2 \cr \vdots\cr (R^{-1}\g_i)_{\ell}\end{pmatrix} \\
&=& \sum_{i=1}^m \begin{pmatrix} \frac{\overline{\lambda}_1^{i-1}}{\sum_{k=1}^m |\lambda_1|^{2(k-1)}} (R^{-1}\g_i)_1 \cr 
\frac{\overline{\lambda}_2^{i-1}}{\sum_{k=1}^m |\lambda_2|^{2(k-1)}}(R^{-1}\g_i)_2 \cr \vdots\cr \frac{\overline{\lambda}_{\ell}^{i-1}}{\sum_{k=1}^m |\lambda_{\ell}|^{2(k-1)}}(R^{-1}\g_i)_{\ell}\end{pmatrix} 
= 
\begin{pmatrix} \sum_{i=1}^m\frac{\overline{\lambda}_1^{i-1}}{\sum_{k=1}^m |\lambda_1|^{2(k-1)}} (R^{-1}\g_i)_1 \cr 
\sum_{i=1}^m\frac{\overline{\lambda}_2^{i-1}}{\sum_{k=1}^m |\lambda_2|^{2(k-1)}}(R^{-1}\g_i)_2 \cr \vdots\cr \sum_{i=1}^m\frac{\overline{\lambda}_{\ell}^{i-1}}{\sum_{k=1}^m |\lambda_{\ell}|^{2(k-1)}}(R^{-1}\g_i)_{\ell}\end{pmatrix} . \label{eq:optimal-reconstruction-weights-alpha-star}
\end{eqnarray}
Although this $\vec{\bfalpha}_\star$  does not minimize (\ref{eq:Omega(alpha)}), nor (\ref{zd:eq:g-S*alpha}),  it follows from the theory of generalized inverses that it does have some interesting properties. 
\begin{remark}
Recall that, by definition, $S^-$ satisfies
\begin{equation}
S S^- S = S,\;\;S^- S S^- = S^-  \mbox{and}\;\; S^- S = (S^- S)^*\ .
\end{equation}
In fact, since $S S^- \neq (S S^-)^*$, we have in general that $S^- \neq S^\dagger$, so $\vec{\bfalpha}_\star \neq S^\dagger\vec{\g}$.
\end{remark}
\begin{proposition}\label{zd:PROP:M-LS}
	Let $M=I \otimes (RR^*)^{-1}$, $(x,y)_M = y^* Mx$, and let $\|x\|_M = \sqrt{x^*Mx}$. Then $\vec{\bfalpha}_\star$ is the minimum $\|\cdot\|_2$--norm solution of the weighted least squares problem 
	\begin{equation}
	\|\vec{g} - S\vec{\bfalpha}\|_M \longrightarrow \min .
	\end{equation}	
\end{proposition}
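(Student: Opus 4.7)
The plan is to reduce the weighted least squares problem to an ordinary $\ell_2$ least squares problem by absorbing the weight $M$ into the data, and then exploit the block structure of the resulting design matrix.

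Since $Z_\ell$ has full column rank, the factor $R$ is square and invertible, so $M = I_m\otimes (RR^*)^{-1}$ is Hermitian positive definite with $M = N^*N$ for $N := I_m\otimes R^{-1}$. Therefore
\begin{equation*}
\|\vec{g}-S\vec{\bfalpha}\|_M^2 = \|N\vec{g}-NS\vec{\bfalpha}\|_2^2,
\end{equation*}
and minimizing $\|\cdot\|_M$ is equivalent to minimizing $\|N\vec{g}-NS\vec{\bfalpha}\|_2$. Using the Kronecker identity $(I_m\otimes R^{-1})(I_m\otimes R)=I$, the structure of $S$ in (\ref{zd:eq:g-S*alpha}) gives the key simplification
\begin{equation*}
NS = \begin{pmatrix}\Delta_{\Lambda_1}\\ \vdots \\ \Delta_{\Lambda_m}\end{pmatrix} =: T,\qquad N\vec{g} = \begin{pmatrix}R^{-1}\g_1\\ \vdots \\ R^{-1}\g_m\end{pmatrix} =: \vec{h},
\end{equation*}
so the problem reduces to $\|\vec{h}-T\vec{\bfalpha}\|_2\to\min$.

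Next, observe that the columns of $T$ have disjoint supports: the $j$-th column carries $\lambda_j^{k-1}$ in the $j$-th slot of the $k$-th block and vanishes elsewhere. Hence $T^*T = \mathrm{diag}\bigl(\sum_{k=1}^m|\lambda_j|^{2(k-1)}\bigr)_{j=1}^\ell$ is diagonal and strictly positive definite (the $k=1$ term contributes $1$ even if some $\lambda_j=0$), so $T$ has full column rank. The normal equations therefore yield the unique least squares solution
\begin{equation*}
\vec{\bfalpha} = (T^*T)^{-1}T^*\vec{h} = \Bigl(\sum_{k=1}^m \Delta_{\Lambda_k}^*\Delta_{\Lambda_k}\Bigr)^{-1}\sum_{i=1}^m \Delta_{\Lambda_i}^*(R^{-1}\g_i),
\end{equation*}
which is precisely the expression (\ref{eq:S-*g}) defining $\vec{\bfalpha}_\star$. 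Because the minimizer is unique, it trivially has minimum $\ell_2$ norm among all minimizers, completing the proof.

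The only subtle point is verifying that the weighted problem genuinely decouples under $N$; once one sees that $(I_m\otimes R^{-1})(I_m\otimes R) = I$ collapses the ``$R$-mixing'' in $S$, the rest is routine. The cleanness of the formula is due to the carefully tuned weight $M$, whose role is exactly to orthogonalize the columns of $S$ in the $M$-inner product—an observation that also explains why $\vec{\bfalpha}_\star$ differs from the ordinary Moore–Penrose solution $S^\dagger\vec{g}$.
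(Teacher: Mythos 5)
Your proof is correct. It corresponds to the second, ``brute force'' derivation that the paper sketches: both arguments factor $M=(I\otimes R^{-*})(I\otimes R^{-1})$ and use $\|x\|_M=\|(I\otimes R^{-1})x\|_2$ to turn the weighted problem into an ordinary least squares problem whose design matrix is the stack $T$ of the $\Delta_{\Lambda_j}$'s. The difference is that the paper stops once the objective has decoupled into $\sum_{j}\|R^{-1}\g_j-\Delta_{\Lambda_j}\vec{\bfalpha}\|_2^2$, and its primary argument is instead an appeal to Rao--Mitra's Theorem 3.3 after checking the commutation identity $(SS^-)^*=M(SS^-)M^{-1}$; that route buys the general structural statement that a reflexive g-inverse of this form yields the minimum-norm solution of the $M$-weighted problem, without any explicit computation. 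You finish the elementary calculation instead: you observe that $NS=T$, that $T^*T=\mathrm{diag}\bigl(\sum_{k=1}^m|\lambda_j|^{2(k-1)}\bigr)$ is nonsingular (the $k=1$ term guarantees strict positivity even if some $\lambda_j=0$), solve the normal equations, and check that the result is literally the expression \eqref{eq:S-*g} defining $\vec{\bfalpha}_\star$, with uniqueness of the minimizer disposing of the minimum-$\|\cdot\|_2$-norm qualifier. This is more self-contained and verifies details the paper leaves implicit (in particular that the decoupled problem's solution really coincides with $\vec{\bfalpha}_\star$, and why the minimum-norm clause is satisfied); what it gives up is the g-inverse framework that the paper emphasizes as the source of ``deeper insights'' and that would still apply in rank-deficient situations where the minimizer is not unique.
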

\begin{proof}
	The matrix $M$ is obviously positive definite and $\|\cdot\|_M$ is well defined norm. The claim is a direct corollary of \cite[Theorem 3.3]{rao-mitra-1972}, because one can easily establish that 
	$$
	(SS^-)^* = (I \otimes (RR^*)^{-1}) (SS^-) (I \otimes (RR^*)) = M (SS^-)M^{-1}.
	$$	
\noindent The claim can be derived also by brute force calculation. Since 
$$
M = (I\otimes R^{-*})(I\otimes R^{-1}) \equiv LL^*, \;\;L\equiv I\otimes R^{-*},
$$
is the Cholesky factorization and $\|x\|_M = \| (I\otimes R^{-1}) x\|_2$, we have
\begin{eqnarray}
\norm*{ \begin{pmatrix} \g_1 \cr \vdots \cr \g_m\end{pmatrix} - \begin{pmatrix} 
R \Delta_{\Lambda_1} \cr \vdots \cr  R  \Delta_{\Lambda_m}\end{pmatrix} \vec{\bfalpha} }_M^2 
&=& \norm*{ (I\otimes R^{-1}) \left[\begin{pmatrix} \g_1 \cr \vdots \cr \g_m\end{pmatrix} - \begin{pmatrix} 
R \Delta_{\Lambda_1} \cr \vdots \cr  R  \Delta_{\Lambda_m}\end{pmatrix} \vec{\bfalpha} \right] }_2^2 \\
&=& \sum_{j=1}^m \|R^{-1}\g_j-\Delta_{\Lambda_j}\vec{\bfalpha}\|_2^2 = \sum_{j=1}^m \|\g_j-R \Delta_{\Lambda_j}\vec{\bfalpha}\|_M^2.
\end{eqnarray}
However, the framework of the theory of matrix generalized inverses gives deeper insights. 
\end{proof}

\begin{remark}
		If 	$Z_\ell^* Z_\ell = I_\ell$ (e.g. if $\A$ is normal, and we compute an orthonormal set of Ritz vectors), then $R$ is diagonal and unitary and $S^\dagger=S^-$.
\end{remark}

\subsection{LS reconstruction in the frequency domain}\label{S=DFT-reconstruct}
We now repeat the reconstruction procedure, but in the frequency domain, following the methodology of \S \ref{zd:SS=DFT+Cauchy-solver}. That is, instead of $n$ time observable trajectories sampled at $m$ time-equidistant points (the rows of $\X_m$) we have their DFT transforms, i.e. $\X_m\DFT$, as the raw input.
We assume that the eigenvalues are simple and are enumerated so that the selected ones are $\lambda_1,\ldots, \lambda_\ell$; the QR factorization of the selected modes is $Z_{\ell}=QR$.

If we post-multiply  the expression in (\ref{eq:Omega(alpha)}) by the unitary DFT matrix $\DFT$, the objective becomes 
\begin{equation}\label{eq:Omega(alpha)-DFT}
\Omega(\vec{\bfalpha}) \equiv \| \X_m \DFT - Z_{\ell} \Delta_{\bfalpha} \begin{pmatrix} \Lambda_1 & \Lambda_2 & \ldots & \Lambda_{m}\end{pmatrix}\DFT \|_F^2 \longrightarrow\min\;\;\mbox{as the function of $\vec{\bfalpha}$},
\end{equation}
where $\begin{pmatrix} \Lambda_1 & \Lambda_2 & \ldots & \Lambda_{m}\end{pmatrix}\DFT$ is the DFT of the $\ell \times m$ submatrix $\Vanderm_{\ell,m}$ of $\Vanderm_m$.
Because of the structure of $\Vanderm_{\ell,m}\DFT$, we distinguish two cases.

\subsubsection{{Case 1}: $\prod_{i=1}^\ell (\lambda_i^ m - 1) \neq 0$}
If none of the $\lambda_i$'s equals an $m$th root of unity, then (see (\ref{zd:eq:V*DFT}))
\begin{displaymath}
\begin{pmatrix} \Lambda_1 & \Lambda_2 & \ldots & \Lambda_{m}\end{pmatrix}\DFT = 
\widehat{\mathcal{D}}_1 \widehat{\Cauchy} {\mathcal{D}}_2
\end{displaymath}
where $\widehat{\cdot}$ denotes an $\ell\times m$ submatrix from the relation (\ref{zd:eq:V*DFT}). In particular, $(\widehat{\mathcal{D}}_1)_{ii}=(\lambda_i^m-1)/\sqrt{m}\neq 0$, $(\widehat{\Cauchy})_{ij}=1/(\lambda_i-\overline{\omega}^{j-1})$, and ${\mathcal{D}}_2$ is unitary diagonal matrix. Introduce column partition
\begin{equation}\label{eq:C-hat-partition}
\widehat{\Cauchy} = \begin{pmatrix} \widehat{\Cauchy}_1 & \ldots & \widehat{\Cauchy}_m\end{pmatrix},\;\;
\widehat{\Cauchy}_j = \begin{pmatrix} {\displaystyle \frac{1}{\lambda_1-\overline{\omega}^{j-1}}}\cr \vdots \cr 
{\displaystyle \frac{1}{\lambda_{\ell}-\overline{\omega}^{j-1}}}
\end{pmatrix},\;\;
\Delta_{\Cauchy_j} = \begin{pmatrix} {\displaystyle \frac{1}{\lambda_1-\overline{\omega}^{j-1}}} & 0 & \cdot & 0 \cr 
0 & {\displaystyle \frac{1}{\lambda_2-\overline{\omega}^{j-1}}} & \cdot & \cdot \cr
\cdot & \cdot & \cdot & 0 \cr
0 & \cdot & 0 & {\displaystyle \frac{1}{\lambda_{\ell}-\overline{\omega}^{j-1}}}\end{pmatrix} .
\end{equation}
Let $\widehat{\X}_m = \X_m \DFT \mathcal{D}_2^*=(\widehat{\f}_1, \ldots, \widehat{\f}_m)$, $\Delta_{\bfbeta}=\Delta_{\bfalpha}\widehat{\mathcal{D}}_1$. Then
\begin{eqnarray}
\Omega(\vec{\bfalpha}) &=& \|\widehat{\X}_m -  Z_\ell \Delta_{\bfbeta} \widehat{\Cauchy}\|_F^2 = \| (\widehat{\f}_1, \ldots, \widehat{\f}_m) - Z_\ell \Delta_{\bfbeta}\begin{pmatrix} \widehat{\Cauchy}_1 & \ldots & \widehat{\Cauchy}_m\end{pmatrix}\|_F^2 \label{eq:Omega-beta-1}\\
&=& 
\norm*{ \begin{pmatrix} \widehat{\f}_1 \cr \vdots \cr \widehat{\f}_m\end{pmatrix} - \begin{pmatrix} Z_{\ell}\Delta_{\bfbeta}\Cauchy_1 \cr \vdots \cr Z_{\ell}\Delta_{\bfbeta}\Cauchy_m\end{pmatrix}}_2^2 =\mbox{\framebox{use: $\Delta_{\bfbeta}\Cauchy_i = \Delta_{\Cauchy_i}\vec{\bfbeta}$}} 
= \norm*{ \begin{pmatrix} \widehat{\f}_1 \cr \vdots \cr \widehat{\f}_m\end{pmatrix} - \begin{pmatrix} Z_{\ell}\Delta_{\Cauchy_1} \cr \vdots \cr Z_{\ell}\Delta_{\Cauchy_m}\end{pmatrix} \vec{\bfbeta}}_2^2 .\nonumber
\end{eqnarray}
If we set  $Q^*\widehat{\f}_i = \widehat{\g}_i$, then, using the reflexive g--inverse as before,  
\begin{eqnarray*}
\vec{\bfalpha} &=& \widehat{\mathcal{D}}_1^{-1} \begin{pmatrix} 
\Delta_{\Cauchy_1} \cr \vdots \cr  \Delta_{\Cauchy_m}\end{pmatrix}^{\dagger}(I\otimes R^{-1})\begin{pmatrix} \widehat{\g}_1 \cr \vdots \cr \widehat{\g}_m\end{pmatrix}
=\widehat{\mathcal{D}}_1^{-1} (\sum_{k=1}^m \Delta_{\Cauchy_k}^* \Delta_{\Cauchy_k} )^{-1} \sum_{j=1}^m \Delta_{\Cauchy_j}^* (R^{-1}\widehat{\g}_j) \\
&=&
\widehat{\mathcal{D}}_1^{-1}
\left(\begin{smallmatrix} \sum_{k=1}^m \frac{1}{|\lambda_1 - \overline{\omega}^{k-1}|^2} & 0 & \ldots & 0 \cr
0 & \sum_{k=1}^m \frac{1}{|\lambda_2 - \overline{\omega}^{k-1}|^2} & \ddots & \vdots \cr
\vdots & \ddots & \ddots & 0 \cr
0 & \ldots & 0 & \sum_{k=1}^m \frac{1}{|\lambda_{\ell} - \overline{\omega}^{k-1}|^2}\end{smallmatrix}\right)^{-1} 
\sum_{j=1}^m \Delta_{\Cauchy_j}^* (R^{-1}\widehat{\g}_j) \\
&=& 
\widehat{\mathcal{D}}_1^{-1}\sum_{j=1}^m \begin{pmatrix} \frac{1}{\sum_{k=1}^m \frac{\overline{\lambda}_1 -{\omega}^{j-1}}{|\lambda_1-\overline{\omega}^{k-1}|^2}} (R^{-1}\widehat{\g}_j)_1 \\[5mm]
\frac{1}{\sum_{k=1}^m \frac{\overline{\lambda}_2 -{\omega}^{j-1}}{|\lambda_2-\overline{\omega}^{k-1}|^2}}(R^{-1}\widehat{\g}_j)_2 \cr \vdots\cr \frac{1}{\sum_{k=1}^m \frac{\overline{\lambda}_{\ell} -{\omega}^{j-1}}{|\lambda_{\ell}-\overline{\omega}^{k-1}|^2}}(R^{-1}\widehat{\g}_j)_{\ell}\end{pmatrix} 
= 
\widehat{\mathcal{D}}_1^{-1} \begin{pmatrix} \sum_{j=1}^m\frac{1}{\sum_{k=1}^m \frac{\overline{\lambda}_1 -{\omega}^{j-1}}{|\lambda_1-\overline{\omega}^{k-1}|^2}} (R^{-1}\widehat{\g}_j)_1 \\[5mm]
\sum_{j=1}^m\frac{1}{\sum_{k=1}^m \frac{\overline{\lambda}_2 -{\omega}^{j-1}}{|{\lambda}_2-\overline{\omega}^{k-1}|^2}}(R^{-1}\widehat{\g}_j)_2 \cr \vdots\cr \sum_{j=1}^m\frac{1}{\sum_{k=1}^m \frac{\overline{\lambda}_{\ell} -{\omega}^{j-1}}{|{\lambda}_{\ell}-\overline{\omega}^{k-1}|^2}}(R^{-1}\widehat{\g}_j)_{\ell}\end{pmatrix}\!\! .
\end{eqnarray*}
Hence, the components $\alpha_i$ of the LS solution via the reflexive g--inverse are
\begin{displaymath}
\bfalpha_i = \frac{\sqrt{m}}{\lambda_i^m-1} \sum_{j=1}^m
\frac{{\displaystyle \frac{1}{\overline{\lambda}_{i} -{\omega}^{j-1}}}}{{\displaystyle \sum_{k=1}^m \frac{1}{|{\lambda}_{i}-\overline{\omega}^{k-1}|^2}}}(R^{-1}\widehat{\g}_j)_{i} = \frac{\sqrt{m}}{{\displaystyle \prod_{p=1}^m (\lambda_i-\overline{\omega}^{p-1})}} \sum_{j=1}^m
\frac{{\displaystyle \frac{1}{\overline{\lambda}_{i} -{\omega}^{j-1}}}}{{\displaystyle \sum_{k=1}^m \frac{1}{|{\lambda}_{i}-\overline{\omega}^{k-1}|^2}}}(R^{-1}\widehat{\g}_j)_{i} .
\end{displaymath}

\begin{remark}
It remains an interesting open question whether these formulas have an appropriate interpretation in a wider context. Also, its ingredients such as 
$R^{-1}\widehat{\g}_j$ or the harmonic mean of squared distances from $\lambda_i$ to the $m$th roots of unity may be of interest.
\end{remark}


\subsubsection{Case 2:  $\prod_{i=1}^\ell (\lambda_i^ m - 1) = 0$}
We now consider the case when some of the $\lambda_i$'s are among the $m$th roots of unity. Here we recall (\ref{zd:eq:V*DTF-general}), i.e. if $\lambda_i=\omega^{1-j}$ then the $i$th row of $\begin{pmatrix} \Lambda_1 & \Lambda_2 & \ldots & \Lambda_{m}\end{pmatrix} \DFT$ reads
\begin{displaymath}
( \begin{pmatrix} \Lambda_1 & \Lambda_2 & \ldots & \Lambda_{m}\end{pmatrix} \DFT)_{ij} = \frac{1}{\sqrt{m}} \prod_{\stackrel{k=1}{k\neq j}}^m (\lambda_i - \omega^{1-k}) \omega^{1-j},\;\; ( \begin{pmatrix} \Lambda_1 & \Lambda_2 & \ldots & \Lambda_{m}\end{pmatrix}\DFT)_{ik}=0\;\;\mbox{for}\;k\neq j.
\end{displaymath}
We set $(\mathcal{D}_1)_{ii}=1/\sqrt{m}$, $\Cauchy_{ij}=\prod_{\stackrel{k=1}{k\neq j}}^m (\lambda_i - \omega^{1-k})$ and $\Cauchy_{ik}=0$ for $k\neq j$.
Hence, if e.g. $\lambda_i=\omega^{1-p}$, $\lambda_{i+1}=\omega^{1-q}$ are the only two eigenvalues that match some $m$th root of unity, we have 
the following version of the partition (\ref{eq:C-hat-partition}):
\begin{displaymath}
\widehat{\Cauchy}_p = \begin{pmatrix} {\displaystyle \frac{1}{\lambda_1-\overline{\omega}^{p-1}}}\cr \vdots \cr 
{\displaystyle \frac{1}{\lambda_{i-1}-\overline{\omega}^{p-1}}}\cr
\prod_{\stackrel{k=1}{k\neq p}}^m (\lambda_i - \omega^{1-k}) \cr
 0\cr {\displaystyle \frac{1}{\lambda_{i+2}-\overline{\omega}^{p-1}}}\cr \vdots \cr
 {\displaystyle \frac{1}{\lambda_{\ell}-\overline{\omega}^{p-1}}}
\end{pmatrix},\;\;
\widehat{\Cauchy}_q = \begin{pmatrix} {\displaystyle \frac{1}{\lambda_1-\overline{\omega}^{q-1}}}\cr \vdots \cr 
{\displaystyle \frac{1}{\lambda_{i-1}-\overline{\omega}^{q-1}}}\cr
0\cr
\prod_{\stackrel{k=1}{k\neq q}}^m (\lambda_i - \omega^{1-k}) \cr
{\displaystyle \frac{1}{\lambda_{i+2}-\overline{\omega}^{p-1}}}\cr
\vdots \cr
{\displaystyle \frac{1}{\lambda_{\ell}-\overline{\omega}^{q-1}}}
\end{pmatrix},\;\;
\widehat{\Cauchy}_j = \begin{pmatrix} {\displaystyle \frac{1}{\lambda_1-\overline{\omega}^{j-1}}}\cr \vdots \cr 
{\displaystyle \frac{1}{\lambda_{i-1}-\overline{\omega}^{j-1}}}\cr
0\cr
0 \cr {\displaystyle \frac{1}{\lambda_{i+2}-\overline{\omega}^{p-1}}}\cr
\vdots \cr
{\displaystyle \frac{1}{\lambda_{\ell}-\overline{\omega}^{j-1}}}
\end{pmatrix},\;\;k\not\in\{ p, q\}.
\end{displaymath}
As an illustration of the reconstruction by the product $Z_{\ell}\Delta_{\bfbeta}\widehat{\Cauchy}$ when two of the $\lambda_i$'s are among the roots of unity, consider the following schematic representation of (\ref{eq:Omega-beta-1}) with $m=5$, $\ell=3$:
$$
\underbrace{\left(\begin{smallmatrix}
x & \blacklozenge & \star & x & x \cr
x & \blacklozenge & \star & x & x \cr
x & \blacklozenge & \star & x & x \cr
x & \blacklozenge & \star & x & x \cr
x & \blacklozenge & \star & x & x \cr
x & \blacklozenge & \star & x & x \cr
x & \blacklozenge & \star & x & x \cr
x & \blacklozenge & \star & x & x \cr
\end{smallmatrix}\right)}_{\widehat{\X}_m} \approx
\underbrace{\left(\begin{smallmatrix}
\bullet & \star & \blacklozenge\cr
\bullet & \star & \blacklozenge\cr
\bullet & \star & \blacklozenge\cr
\bullet & \star & \blacklozenge\cr
\bullet & \star & \blacklozenge\cr
\bullet & \star & \blacklozenge\cr
\bullet & \star & \blacklozenge\cr
\bullet & \star & \blacklozenge\cr
\end{smallmatrix}\right)}_{Z_{\ell}}
\overbrace{\begin{pmatrix} \lambda_1 & 0 & 0 \cr 0 & \lambda_2 & 0 \cr 0 & 0 & \lambda_3 \end{pmatrix}}^{\Delta_{\bfbeta}}
\underbrace{\begin{pmatrix}
	+ & + & + & + & +  \cr
	0 & 0 & \star & 0 & 0  \cr
	0 & \blacklozenge & 0 & 0 & 0 
	\end{pmatrix}}_{\begin{pmatrix} \widehat{\Cauchy}_1 & \widehat{\Cauchy}_2 & \ldots & \widehat{\Cauchy}_{m}\end{pmatrix}}
$$
Here $\lambda_2$ and $\lambda_3$ are roots of unity and thus $Z_{\ell}(:,2)$ ($\star$) and $Z_{\ell}(:,3)$ ($\blacklozenge$) participate in the representation of only $\widehat{\X}_m(:,3)$ and $\widehat{\X}_m(:,2)$, respectively.
That is, if $\lambda_i = \omega^{1-j}$, then $Z_{\ell}(:,i)$ participates in the reconstruction of only one transformed snapshot, namely $\widehat{\f}_{j}$.


\section{Reconstructions based on GLA theory}\label{S=GLA-connection}
We now wish to compare and contrast our above calculations to the Generalize Laplace Analysis (GLA) theorem \cite{Mohr:2014wm,Mezic:2013ei} adapted to this matrix setting. The GLA theorem is a theoretical result for a Koopman operator acting on infinite dimensional function spaces which constructs projections of functions onto Koopman eigenfunctions via ergodic-type averages. The classical formulation begins with merely knowing the spectrum of $\A$ and constructs projections onto the eigenspaces using infinite-time averages. 

\subsection{Matrix GLA}
We call an eigenvalue $\lambda$ of $\A$ a dominant eigenvalue if for all other eigenvalues $\mu$, $\abs{\lambda}\geq\abs{\mu}$. We call $\lambda$ strictly dominant if for all $\mu$, $\abs{\lambda} > \abs{\mu}$.

\begin{proposition}[Matrix GLA]
Let $\lambda\in\sigma(\A)$ be a dominant eigenvalue and $\Pi_{\lambda} : \C^n \to \C^n$ the (skew) projection onto $N(\A - \lambda I)$. Then for any $\f \in \C^n$
	\begin{equation}
	\Pi_{\lambda} \mbf f = \lim_{m\to\infty} \frac{1}{m} \sum_{i=0}^{m-1} \lambda^{-i}\A^i \f.
	\end{equation}
\end{proposition}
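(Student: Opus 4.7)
The plan is to use the spectral decomposition of $\A$ (available since $\A$ is assumed diagonalizable in the Preliminaries) and reduce the claim to a classical Cesàro--type limit computation. Write $\sigma(\A)=\{\mu_1,\ldots,\mu_p\}$ (with $\lambda=\mu_1$, say), and let $\Pi_{\mu_j}:\C^n\to\C^n$ be the associated spectral (skew) projections onto $N(\A-\mu_j I)$. By diagonalizability, $\Pi_{\mu_j}\Pi_{\mu_k}=\delta_{jk}\Pi_{\mu_j}$, $\sum_{j}\Pi_{\mu_j}=I$, and
\begin{equation}
\A^i \;=\; \sum_{j=1}^p \mu_j^{\,i}\,\Pi_{\mu_j},\qquad i\geq 0.
\end{equation}

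Substituting this into the Cesàro average and interchanging the finite sums yields
\begin{equation}
\frac{1}{m}\sum_{i=0}^{m-1}\lambda^{-i}\A^i\f
\;=\;\sum_{j=1}^{p}\Bigl(\frac{1}{m}\sum_{i=0}^{m-1}(\mu_j/\lambda)^{i}\Bigr)\Pi_{\mu_j}\f.
\end{equation}
The term $j=1$ (i.e.\ $\mu_j=\lambda$) contributes $\Pi_{\lambda}\f$ exactly, so it remains to show that the coefficient
\begin{equation}
S_m(\mu_j):=\frac{1}{m}\sum_{i=0}^{m-1}(\mu_j/\lambda)^{i}
\end{equation}
tends to $0$ as $m\to\infty$ for every $\mu_j\neq\lambda$. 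This is the only real step; the proposition will then follow.

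For $\mu_j\neq\lambda$ the partial sum evaluates to $S_m(\mu_j)=\tfrac{1}{m}\cdot\tfrac{1-(\mu_j/\lambda)^{m}}{1-\mu_j/\lambda}$. I would split into two subcases governed by the dominance hypothesis $|\mu_j|\leq|\lambda|$. First, if $|\mu_j|<|\lambda|$, then $|(\mu_j/\lambda)^m|\to 0$ geometrically, so $|S_m(\mu_j)|=O(1/m)\to 0$. Second, if $|\mu_j|=|\lambda|$ with $\mu_j\neq\lambda$, then $|(\mu_j/\lambda)^m|=1$ for all $m$, but the numerator is bounded by $2$ and $1-\mu_j/\lambda\neq 0$, hence $|S_m(\mu_j)|\leq 2/(m\,|1-\mu_j/\lambda|)\to 0$. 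Combining the two subcases and using that $\p$ is finite, the sum over $j\geq 2$ vanishes in the limit, and we obtain $\lim_{m\to\infty}\tfrac{1}{m}\sum_{i=0}^{m-1}\lambda^{-i}\A^i\f=\Pi_\lambda\f$.

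The main (and only mild) obstacle is the second subcase, i.e.\ handling peripheral eigenvalues of the same modulus as $\lambda$ but different argument; here one must explicitly exploit the Cesàro $1/m$ factor together with the non-resonance $\mu_j\neq\lambda$ (so that $1-\mu_j/\lambda$ is bounded away from $0$ over the finite spectrum) rather than any decay of $(\mu_j/\lambda)^m$. This is exactly where the ``dominant but not necessarily strictly dominant'' hypothesis is used, and distinguishes the Cesàro formulation from a naive Laplace--type average.
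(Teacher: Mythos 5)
Your proof is correct and follows essentially the same route as the paper: expand $\f$ in the eigenbasis (equivalently, via the spectral projections) and observe that the Ces\`aro averages $\frac{1}{m}\sum_{i=0}^{m-1}(\mu/\lambda)^i$ vanish for every $\mu\neq\lambda$ with $|\mu|\leq|\lambda|$, leaving only $\Pi_\lambda\f$. The paper states this convergence without detail, whereas you explicitly separate the interior case $|\mu|<|\lambda|$ from the peripheral case $|\mu|=|\lambda|$, $\mu\neq\lambda$ --- a worthwhile clarification, since the latter is precisely where the Ces\`aro $1/m$ factor (rather than geometric decay) is doing the work.
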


\begin{remark}
If we let $\f_i = \A^{i-1}\f$, the above average is
${\displaystyle 	\lim_{m\to\infty} \frac{1}{m} \sum_{i=1}^{m} \lambda^{-i+1} \f_i}$.

\end{remark}
This is easily shown by expanding $\mbf v$ in the eigenvector basis. The above sum is then composed of $n$-terms of the form $\frac{1}{m}\sum_{i=0}^{m-1} \left(\frac{\mu}{\lambda}\right)^{i}$. Since $\lambda$ is a dominant eigenvalue, then for any $\mu \neq \lambda$, the term $\frac{1}{m}\sum_{i=0}^{m-1} \left(\frac{\mu}{\lambda}\right)^{i}$ converges to 0. Projections onto another eigenspace requires subtracting off from $\mbf v$ all projections $\Pi_\lambda \mbf v$ corresponding to all eigenvalues strictly dominating the one of interest and then running the above average for the new vector.

\begin{remark}
The GLA is an inherently infinitary result. A straight forward application using finite data quickly leads to numerical instability.
\end{remark}

The projection operators $\Pi_{\lambda}$ are skew projections along the other eigenvectors of $\A$. We can define a weighted inner product in which these spectral projections become orthogonal. Let $Z_n = [\mbf z_1,\dots, \mbf z_n]$ be the normalized eigenvectors of $\A$ and let $Z_n = QR$ be the QR decomposition of $Z_n$. Let $L = R^{-1}Q^* (\equiv Z_n^{-1} \equiv Z_n^\dag)$ and $P = L^*L$ and define the hermitian form
	\begin{equation}
	\inner{\mbf v}{\mbf w}_{P} = \mbf w^* P \mbf v.
	\end{equation}
Since $Z_n$ is full rank, this form is nondegenerate and defines an inner product. Noting that $L$ is just $Z_n^{-1}$ it can easily be shown that $\inner{\mbf z_i}{\mbf z_j}_{P} = \delta_{i,j}$ which implies that the projections are orthogonal projections with respect to this inner product.

We can formulate a ``coordinate'' version of the GLA theorem. Since we know the eigenvectors, we can reduce the infinitary result to a finitary one.

\begin{proposition}\label{prop:coord-gla}
Let $\f = Z_n \vec{\mbf\alpha} = \sum_{i=1}^{n} z_i \vec{\mbf \alpha}_i$ and define $\f_i = \A^{i-1}\f = Z_n \Lambda_n^{i-1} \vec{\mbf\alpha}$, where $\Lambda_n$ is the diagonal matrix of eigenvalues of $\A$. Then
	\begin{equation}
	\vec{\mbf\alpha} 
	= \frac{1}{m} \sum_{i=1}^{m} \Lambda_n^{-(i-1)} R^{-1}Q^* \mbf f_i 
	= \frac{1}{m} \sum_{i=1}^{m} \Lambda_n^{-(i-1)} L \mbf f_i 
	= \frac{1}{m} \sum_{i=1}^{m} \Lambda_n^{-(i-1)} Z_n^{-1} \mbf f_i . \label{eq:matrix-gla}
	\end{equation}
\end{proposition}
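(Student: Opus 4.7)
The proof is essentially a direct verification, since the three expressions on the right of \eqref{eq:matrix-gla} are manifestly equal once we observe that $Z_n = QR$ implies $Z_n^{-1} = R^{-1}Q^{-1} = R^{-1}Q^{*}$ (the last equality because $Q$ is unitary in the economy-size QR factorization of the square full-rank matrix $Z_n$). So $L = R^{-1}Q^{*} = Z_n^{-1}$, and it suffices to establish the last equality in \eqref{eq:matrix-gla}.

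The plan is to substitute the hypothesis $\f_i = Z_n \Lambda_n^{i-1}\vec{\bfalpha}$ into the averaged sum and telescope. Concretely, I would write
\begin{equation*}
\frac{1}{m}\sum_{i=1}^{m} \Lambda_n^{-(i-1)} Z_n^{-1} \f_i
= \frac{1}{m}\sum_{i=1}^{m} \Lambda_n^{-(i-1)} Z_n^{-1} Z_n \Lambda_n^{i-1} \vec{\bfalpha}
= \frac{1}{m}\sum_{i=1}^{m} \Lambda_n^{-(i-1)} \Lambda_n^{i-1} \vec{\bfalpha}
= \frac{1}{m}\sum_{i=1}^{m} \vec{\bfalpha} = \vec{\bfalpha},
\end{equation*}
where the cancellations $Z_n^{-1}Z_n = I_n$ and $\Lambda_n^{-(i-1)}\Lambda_n^{i-1}=I_n$ use only that $Z_n$ is invertible (by the standing assumption that $\A$ is diagonalizable with simple spectrum) and that $\Lambda_n$ is diagonal with nonzero entries (so its integer powers commute and multiply in the obvious way). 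There is no obstacle: unlike the infinitary GLA, whose content lies in showing that the off-diagonal Cesàro averages of $(\mu/\lambda)^i$ vanish, the finitary ``coordinate'' version holds term-by-term because knowledge of $Z_n$ lets us diagonalize exactly rather than asymptotically.

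The only thing worth remarking on in the write-up is the consistency of the three forms displayed in \eqref{eq:matrix-gla}: I would briefly note the identity $L = Z_n^{-1}$ (hence also $L = Z_n^{\dagger}$ in the square full-rank case) before the one-line telescoping calculation above, and comment that the factor $1/m$ is what makes each of the $m$ identical contributions $\vec{\bfalpha}$ average to $\vec{\bfalpha}$, rather than accumulate. This makes transparent the contrast with the infinitary GLA, where the $1/m$ factor is needed to kill the oscillating off-diagonal terms $\sum_{i=0}^{m-1}(\mu/\lambda)^i$.
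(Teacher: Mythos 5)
Your proposal is correct and follows essentially the same route as the paper's own proof: substitute $\f_i = Z_n\Lambda_n^{i-1}\vec{\bfalpha}$, cancel $Z_n^{-1}Z_n$ and $\Lambda_n^{-(i-1)}\Lambda_n^{i-1}$, and average the $m$ identical terms, with the identity $L = R^{-1}Q^* = Z_n^{-1}$ accounting for the equivalent formulations. No gaps.
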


\begin{proof}
Since $Z_n^{-1}$ exists,
	\begin{displaymath}
	\frac{1}{m} \sum_{i=1}^{m}\Lambda_n^{-i+1} Z_n^{-1} \mbf f_i 
	= \frac{1}{m} \sum_{i=1}^{m} \Lambda_n^{-i+1} Z_n^\dag Z_n \Lambda_n^{i-1} \vec{\mbf \alpha} 
	= \frac{1}{m} \sum_{i=1}^{m} \vec{\mbf\alpha} 
	= \vec{\mbf \alpha}.
	\end{displaymath}
By definition $Z_n^{-1} = L = R^{-1}Q^*$. This gives the equivalent formulations.
\end{proof}

\subsection{GLA formulas for reconstruction weights}
As in section \ref{S=Snapshot-rec-theory}, we wish to reconstruct the evolution $(\f_1,\dots, \f_m)$ with a smaller number of Ritz vectors $Z_\ell = [\mbf z_1,\dots, \mbf z_\ell]$, $\ell \leq m$. Given that the matrix version of the GLA (prop.\ \ref{prop:coord-gla}) gives the exact reconstruction weights, we would like to investigate how well GLA reconstructs the weights without full knowledge of the eigenvectors. Since the GLA gives a skew projection onto the eigenfunctions rather than an orthogonal one, the reconstructed weights will not be optimal in minimizing the error's 2-norm.

The objective is to find $\vec{\mbf\alpha} \in \C^{\ell\times 1}$ such that
	\begin{equation}\label{eq:reconstruction-loss-function}
	\vec{\mbf\alpha} = \argmin_{\vec{\mbf\beta} \in \C^{\ell\times 1}} \sum_{i=1}^{m} \norm{\mbf f_i - \sum_{j=1}^{\ell} \mbf z_j \lambda_j^{i-1} \vec{\mbf\beta}_j }_2^2
	\end{equation}
In general, it will not be possible to perfectly reconstruct the data. Define for $\vec{\mbf\beta} = (\vec{\mbf\beta}_1,\dots, \vec{\mbf\beta}_\ell)$, the functionals
$$
	\Omega_i(\mbf\beta) = \norm[\Big]{\mbf f_i - \sum_{j=1}^{\ell} \mbf z_j \lambda_j^{i-1} \vec{\mbf\beta}_j }^2, \;\;
	\Omega(\mbf\beta) = \sum_{i=1}^{m} \Omega_i(\vec{\mbf\beta}).
	$$

First consider the case of a particular $\f_i$. We look for $\vec{\mbf \alpha}^{(i)} \in \C^{\ell\times1}$ that reconstructs $\f_i$ exactly; i.e., the $\vec{\mbf \alpha}^{(i)}$ satsifying
	\begin{equation}
	\vec{\mbf \alpha}^{(i)} = \argmin_{\mbf\beta\in\C^\ell} \Omega_i(\mbf\beta) = \0.
	\end{equation}
Write $Z_\ell = [\mbf z_1 \cdots \mbf z_\ell] \in \C^{n\times \ell}$. The QR-decomposition of this matrix is written as $Z_\ell = \begin{bmatrix}Q_\ell & Q_\ell^\perp\end{bmatrix}\begin{pmatrix} R \\ \mbf 0 \end{pmatrix}$, where $\linspan Z_\ell = \linspan Q_\ell$, and $R \in \C^{\ell\times\ell}$. The pseudo inverse of $Z_\ell$ is
	\begin{equation}
	Z_\ell^\dag = [R^{-1}\, |\, \mbf 0] Q^* .
	\end{equation}
Let $\mbf g_i \in \C^\ell$ and $\mbf h_i \in \C^{n-\ell}$ such that $\mbf f_i = Q\begin{pmatrix} \mbf g_i \\ \mbf h_i \end{pmatrix}$; $\mbf g_i \in \linspan\set{\mbf z_1,\dots,\mbf z_\ell}$ and $\mbf h_i \perp \linspan\set{\mbf z_1,\dots,\mbf z_\ell}$.  Now,
	\begin{align}
	\Omega_i(\mbf \beta) 
	&= \norm{\mbf f_i - Z_\ell \Lambda_\ell^{i-1} \mbf \beta}^2 
	= \norm{ Q^*(\mbf f_i - Z_\ell \Lambda_\ell^{i-1} \mbf \beta )}^2 
	= \norm[\Big]{ \begin{pmatrix} \mbf g_i \\ \mbf h_i \end{pmatrix} - \begin{pmatrix} R \\ \mbf 0 \end{pmatrix} \Lambda_\ell^{i-1} \mbf \beta }^2 \\
	&= \norm{\mbf h_i}^2 + \norm{ \mbf g_i - R \Lambda_\ell^{i-1} \mbf \beta }^2 .
	\end{align}
Thus $\Omega_i(\cdot)$ is minimized with the choice
	$\vec{\mbf\alpha}^{(i)} = \Lambda_{\ell}^{-i+1}R^{-1}\mbf g_i$.
Component-wise this is
	\begin{equation}
	(\vec{\mbf\alpha}^{(i)})_j = \frac{1}{\lambda_j^{i-1}} (R_\ell^{-1}\mbf g_i)_{j}.
	\end{equation}

It is unlikely that $\vec{\mbf\alpha}^{(i)}$ will be the minimizer for every $\Omega_{j}(\cdot)$ and is therefore unlikely to be a minimizer for $\Omega = \Omega_1 + \cdots + \Omega_m$. However, Jensen's inequality will allow us to construct an $\vec{\mbf \alpha}$ which does better than the average of the error's induced by each $\vec{\mbf\alpha}^{(i)}$. Let $\Omega = \set{1,\dots,m}$ and define the probability measure $p:\Omega \to [0,1]$ as $p(i) = \frac{1}{m}$. Define the random vectors $X : \Omega \to \C^n$ as $X(i) = \vec{\mbf\alpha}^{(i)}$. Then since $\Omega$ is a convex function, Jensen's inequality gives us
	\begin{equation}
	\sum_{i=1}^{m} p(i) \Omega( X(i) ) \geq \Omega\Big( \sum_{i=1}^{m} p(i) X(i) \Big) .
	\end{equation}
In other words,
	\begin{equation}
	\frac{1}{m} \sum_{i=1}^{m} \Omega( \vec{\mbf\alpha}^{(i)} ) \geq \Omega \Big( \frac{1}{m}\sum_{i=1}^{m} \vec{\mbf\alpha}^{(i)} \Big).
	\end{equation}
We can rewrite $\frac{1}{m}\sum_{i=1}^{m} \vec{\mbf\alpha}^{(i)}$ as
	\begin{align}
	\frac{1}{m}\sum_{i=1}^{m} \vec{\mbf\alpha}^{(i)}
	&= \frac{1}{m}\sum_{i=1}^{m} \Lambda_{\ell}^{-i+1}R^{-1}\mbf g_i.
	\end{align}
Compare this formula with \eqref{eq:matrix-gla} in Proposition \ref{prop:coord-gla}. This is a GLA formula, except now we can only project onto the $\ell$ eigenfunctions we have. We call the components of this vector the GLA reconstruction weights $\vec{\mbf\alpha}_{(GLA)}$. This can be written in a form similar to $\vec{\mbf\alpha}_\star$ (eq.\ \eqref{eq:optimal-reconstruction-weights-alpha-star}):
	\begin{equation}\label{eq:gla-reconstruction-weights}
	\vec{\mbf\alpha}_{(GLA)} = \frac{1}{m}\sum_{i=1}^{m} \Lambda_{\ell}^{-i+1}R^{-1}\mbf g_i 
	= \begin{pmatrix}
	\frac{1}{m}\sum_{i=1}^{m} \lambda_1^{-i+1}(R^{-1}\mbf g_i)_1 \\
	\frac{1}{m}\sum_{i=1}^{m} \lambda_2^{-i+1} (R^{-1}\mbf g_i)_2 \\
	\vdots \\
	\frac{1}{m}\sum_{i=1}^{m} \lambda_\ell^{-i+1} (R^{-1}\mbf g_i)_\ell \\
	\end{pmatrix}.
	\end{equation}

\subsubsection{Optimal reconstruction formula as a generalized ergodic average.}
Using a reflexive g-inverse, we recall formula \eqref{eq:optimal-reconstruction-weights-alpha-star} giving the optimal reconstruction weights using $\ell$;
	\begin{equation*}
	\vec{\mbf\alpha}_\star = \sum_{i=1}^{m} 
	\begin{pmatrix}
	\frac{\bar{\lambda}_1^{i-1}}{\sum_{k=1}^{m} \abs{\lambda_1}^{2(k-1)}} 	& 	& \\
	&	\ddots & \\
	& 			& \frac{\bar{\lambda}_\ell^{i-1}}{\sum_{k=1}^{m} \abs{\lambda_\ell}^{2(k-1)}}
	\end{pmatrix}
	\begin{pmatrix}
	(R^{-1}\mbf g_i)_1 \\
	\vdots \\
	(R^{-1}\mbf g_i)_\ell
	\end{pmatrix} . \tag{\ref{eq:optimal-reconstruction-weights-alpha-star} revisited}
	\end{equation*}
In the case of unimodular spectrum, the weights given via the reflexive g-inverse reduce exactly to the GLA weights.

\begin{proposition}
When the spectrum of $\A$ lies on the unit circle, $\vec{\mbf\alpha}_{\star} = \vec{\mbf\alpha}_{(GLA)}$.
\end{proposition}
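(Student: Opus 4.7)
The plan is essentially a direct component-by-component comparison of the two explicit formulas, exploiting two elementary consequences of the hypothesis $|\lambda_j|=1$ for every $j$.

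First I would write out the $j$-th component of $\vec{\mbf\alpha}_\star$ extracted from the diagonal expression in (\ref{eq:optimal-reconstruction-weights-alpha-star}):
\[
(\vec{\mbf\alpha}_\star)_j \;=\; \sum_{i=1}^{m} \frac{\bar{\lambda}_j^{\,i-1}}{\sum_{k=1}^{m}|\lambda_j|^{2(k-1)}}\,(R^{-1}\mbf g_i)_j,
\]
and the $j$-th component of $\vec{\mbf\alpha}_{(GLA)}$ read off from (\ref{eq:gla-reconstruction-weights}):
\[
(\vec{\mbf\alpha}_{(GLA)})_j \;=\; \frac{1}{m}\sum_{i=1}^{m} \lambda_j^{-i+1}\,(R^{-1}\mbf g_i)_j.
\]

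Next I would apply the two simplifications forced by $|\lambda_j|=1$: the denominator collapses via $\sum_{k=1}^{m}|\lambda_j|^{2(k-1)}=\sum_{k=1}^{m}1=m$, and the numerator satisfies $\bar{\lambda}_j=\lambda_j^{-1}$, whence $\bar{\lambda}_j^{\,i-1}=\lambda_j^{-(i-1)}=\lambda_j^{-i+1}$. Substituting both into the expression for $(\vec{\mbf\alpha}_\star)_j$ produces exactly $(\vec{\mbf\alpha}_{(GLA)})_j$, and since this holds for every index $j\in\{1,\dots,\ell\}$, the vector identity $\vec{\mbf\alpha}_\star=\vec{\mbf\alpha}_{(GLA)}$ follows.

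There is really no obstacle here; the proposition is a sanity-check corollary whose sole content is to verify that the two simplifications above collapse the non-uniformly weighted ergodic average of (\ref{eq:optimal-reconstruction-weights-alpha-star}) into the uniformly weighted one of (\ref{eq:gla-reconstruction-weights}). The only care needed is to note that the $\mbf g_i$ and the $R$ appearing in both formulas are defined identically (both are built from the same $Z_\ell = QR$ and the same projections $\mbf g_i = Q^*\f_i$), so the $(R^{-1}\mbf g_i)_j$ factors match term by term and need no further manipulation.
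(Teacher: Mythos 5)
Your proposal is correct and follows exactly the paper's own argument: a componentwise comparison using $\overline{\lambda}_j=\lambda_j^{-1}$ and $\sum_{k=1}^m|\lambda_j|^{2(k-1)}=m$ to collapse the weighted average into the uniform one. Nothing is missing.
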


\begin{proof}
For $\lambda_j$ with $\abs{\lambda_j} = 1$, we have $\overline{\lambda_j} = \lambda_j^{-1}$. Then for each $j$,
	\begin{align*}
	(\vec{\mbf\alpha}_{\star})_j = \sum_{i=1}^m\frac{\overline{\lambda}_j^{i-1}}{\sum_{k=1}^m |\lambda_1|^{2(k-1)}} (R^{-1}\g_i)_j 
	&= \sum_{i=1}^m\frac{(\lambda_j^{-1})^{i-1}}{\sum_{k=1}^m 1^{k-1}} (R^{-1}\g_i)_j \\
	&= \sum_{i=1}^m\frac{\lambda_j^{-i+1}}{m} (R^{-1}\g_i)_j 
	= (\vec{\mbf\alpha}_{(GLA)})_j.
	\end{align*}
\end{proof}

When we do not have unimodular spectrum, we can interpret the optimal reconstruction weights as the result of a weighted GLA average. Indeed, the formula for each component $(\vec{\mbf\alpha}_\star)_j$ can be manipulated as follows:
	\begin{align}
	(\vec{\mbf\alpha}_\star)_j 
	&= \sum_{i=1}^{m} \frac{\bar{\lambda}_j^{i-1}}{\sum_{k=1}^{m} \abs{\lambda_j}^{2(k-1)}} (R^{-1}\mbf g_i)_j 
	= \sum_{i=1}^{m} \frac{\bar{\lambda}_j^{i-1}}{\sum_{k=1}^{m} \abs{\lambda_j}^{2(k-1)}} \frac{\lambda_j^{i-1}}{\lambda_j^{i-1}} (R^{-1}\mbf g_i)_j  \\
	&= \sum_{i=1}^{m} \frac{\abs{\lambda_j}^{2(i-1)}}{\sum_{k=1}^{m} \abs{\lambda_j}^{2(k-1)}} \frac{1}{\lambda_j^{i-1}} (R^{-1}\mbf g_i)_j.
	\end{align}
Define
	\begin{equation}
	w_{m,j}(i) = \frac{\abs{\lambda_j}^{2(i-1)}}{\sum_{k=1}^{m} \abs{\lambda_j}^{2(k-1)}}.
	\end{equation}
Then $w_{m,j}(i) > 0$ and 
	${\displaystyle \sum_{i=1}^{m} w_{m,j}(i) = 1}$.
Then, for each component $(\vec{\mbf\alpha}_\star)_j$, we have the (non-uniformly) weighted average
	\begin{equation}\label{eq:optimal-weight-average}
	\vec{\mbf\alpha}_j = \sum_{i=1}^{m} w_{m,j}(i) \lambda_j^{-i+1} (R^{-1}\mbf g_i)_j.
	\end{equation}
Let us define the set of diagonal matrices $W_{m}^{(i)}$ as 
	\begin{equation}
	W_{m}^{(i)} =
	\begin{pmatrix}
	w_{m,1}(i) & & \\
	& \ddots & \\
	& & w_{m,\ell}(i)
	\end{pmatrix} \\
	=\begin{pmatrix}
	\frac{\abs{\lambda_1}^{2(i-1)}}{\sum_{k=1}^{m} \abs{\lambda_1}^{2(k-1)}} & & \\
	& \ddots & \\
	& & \frac{\abs{\lambda_\ell}^{2(i-1)}}{\sum_{k=1}^{m} \abs{\lambda_\ell}^{2(k-1)}}
	\end{pmatrix} .
	\end{equation}
Then, the optimal reconstruction weights are given by the weighted GLA formula
	\begin{equation}\label{eq:optimal-weighted-gla}
	\vec{\mbf\alpha}_\star = \sum_{i=1}^{m} W_{m}^{(i)} \Lambda_\ell^{-i+1} R^{-1} \mbf g_i
	\equiv \sum_{i=1}^{m} W_{m}^{(i)} \Lambda_\ell^{-i+1} R^{-1} \mbf g_i.
	\end{equation}

\begin{remark}
Compare this with the formula for $\vec{\mbf\alpha}_{(GLA)}$ (eq.\ \eqref{eq:gla-reconstruction-weights}), where the weight matrix $W_{m,j}$ for the GLA formula is the uniform weight matrix  $W_m^{(i)} = (1/m) I_m$.
\end{remark}

\subsubsection{Distance between GLA weights and optimal reconstruction weights.}
We have two expressions for reconstructing $m$ snapshots using $\ell$ eigenfunctions, namely \eqref{eq:optimal-reconstruction-weights-alpha-star} and \eqref{eq:gla-reconstruction-weights}. We have already shown that when the spectrum is on the unit circle that these formulas are equivalent. In the case when the spectrum is not contained in the unit circle, the question of their equivalence remains. Clearly, for any fixed number $m$ of snapshots, these formulas give different weights. What about the limit of trying to reconstruct an increasing number of snapshots still only using $\ell$ eigenfunctions? If we write $\vec{\mbf\alpha}_\star^{(m)}$ and $\vec{\mbf\alpha}_{(GLA)}^{(m)}$ for the optimal and GLA weights reconstructing $m$ snapshots, does $\norm{\vec{\mbf\alpha}_\star^{(m)} - \vec{\mbf\alpha}_{(GLA)}^{(m)} } \to 0$ as $m\to \infty$?

In general, the answer is no. We consider the simple case when $\A \in \C^{3 \times 3}$ with eigenvectors $\mbf z_1, \dots, \mbf z_3$, where $\mbf z_1 \perp \mbf z_2$ and $\mbf z_3$ is not orthogonal to either of the other eigenvectors. We also assume that the associated eigenvalues satisfy $1 > \abs{\lambda_1}\geq \abs{\lambda_2} > \abs{\lambda_3}$. The evolution $\f_i = \A^{i-1}\mbf v$ satisfies
	\begin{equation}
	\f_i = \sum_{j=1}^{3} \vec\beta_j \mbf z_j \lambda_j^{i-1}.
	\end{equation}
%
We reconstruct with $\mbf z_1, \mbf z_2$. In this case, the $R^{-1}\mbf g_i$ term in the weights' formulas is
	\begin{equation}
	R^{-1}\mbf g_i = R^{-1} Q_2^* \f_i 
		= \begin{pmatrix}
		\vec\beta_1 \lambda_1^{i-1} + \vec\beta_3 \lambda_3^{i-1} \inner{\mbf z_3}{\mbf z_1} \\
		\vec\beta_2 \lambda_2^{i-1} + \vec\beta_3 \lambda_3^{i-1} \inner{\mbf z_3}{\mbf z_2}
		\end{pmatrix}.
	\end{equation}
For $j=1,2$, define $s_j(i) = \vec\beta_j + \vec\beta_3 (\frac{\lambda_3}{\lambda_j})^{i-1} \inner{\mbf z_3}{\mbf z_j}$ we have
	\begin{displaymath}
	(\vec{\mbf \alpha}_\star^{(m)})_j - (\vec{\mbf \alpha}_{(GLA)}^{(m)})_j 
	= \sum_{i=1}^{m} ( w_{j,m}(i) - \frac{1}{m} ) s_j(i) \\
	= \sum_{i=1}^{m} ( \frac{w_j(i)}{A_m} - \frac{1}{m} ) s_j(i),
	\end{displaymath}
where $w_{j}(i) = \abs{\lambda_j}^{2(i-1)}$ and $A_m = \sum_{i=1}^{m} w_{j}(i)$. Note that $A_m$ is summable
	\begin{equation}
	A_m = \sum_{i=1}^{m} \abs{\lambda_j}^{2(i-1)} = \frac{1 - \abs{\lambda_j}^{2m}}{1 - \abs{\lambda_j}}.
	\end{equation}
We can define the limit of the optimal weights as
	\begin{equation}
	\bar w_j(i) = \lim_{m\to\infty} \frac{w_j(i)}{A_m} = \abs{\lambda_j}^{2(i-1)}(1 - \abs{\lambda_j}).
	\end{equation}
Clearly, since $\bar w_j(i) \to 0$ exponentially fast in $i$, for $m$ large enough, we have that $\bar w_j(1) > m^{-1}$ and $\bar w_j(m) < m^{-1}$. Since $\frac{w_{j}(i)}{A_m} \to \bar w_j(i)$ exponentially fast in $m$, then for all $m$ large enough, $\frac{w_{j}(1)}{A_m} > m^{-1}$ and $\frac{w_{j}(m)}{A_m} < m^{-1}$. Let $t_{m}$ be the largest integer $k$ such that $\bar w_j(k) > m^{-1}$. Then 
	\begin{align}
	(\vec{\mbf \alpha}_\star^{(m)})_j - (\vec{\mbf \alpha}_{(GLA)}^{(m)})_j 
	= \sum_{i=1}^{t_m} \abs*{ \frac{w_j(i)}{A_m} - \frac{1}{m} } s_j(i) - \sum_{i=t_m+1}^{m} \abs*{ \frac{w_j(i)}{A_m} - \frac{1}{m} } s_j(i).
	\end{align}
Since $s_j(i) = \vec\beta_j + \vec\beta_3 (\frac{\lambda_3}{\lambda_j})^{i-1} \inner{\mbf z_3}{\mbf z_j}$ and we can vary $\vec\beta_3$, $\mbf z_3$, and $\inner{\mbf z_3}{\mbf z_j}$ (subject to the above assumptions on the last two), it is clear that we can find signals $\set{s_{j}(\cdot)}$ such that $\lim_{m\to\infty} (\vec{\mbf \alpha}_\star^{(m)})_j - (\vec{\mbf \alpha}_{(GLA)}^{(m)})_j  \not\to 0$ as $m\to \infty$.

What is interesting is that in the limit $\vec{\mbf \alpha}_{(GLA)}^{(m)} \to [\vec\beta_1, \vec\beta_2]^T$; i.e., $\vec{\mbf \alpha}_{(GLA)}^{(m)}$ is asymptotically correct, or a consistent estimator in statistical language. To see this, fix $\eps > 0$ and let  $M \in \N$ be such that $\abs{s_j(i) - \vec\beta_j} < \frac{\eps}{2}$ for $m\geq M$. Then 
	\begin{align}
	\abs*{(\vec{\mbf \alpha}_{(GLA)}^{(m)})_j - \vec\beta_j} 
	&= \abs*{ \Big(\frac{1}{m}\sum_{i=1}^{M} s_j(i) + \frac{1}{m}\sum_{i=M+1}^{m} s_j(i) \Big) -\vec\beta_j} 
	\leq \frac{1}{m} \abs*{ \sum_{i=1}^{M} s_j(i)} + \frac{m-M}{m}\frac{\eps}{2}.
	\end{align}
For all $m$ large enough, we have for each $j=1,2$, $\abs*{(\vec{\mbf \alpha}_{(GLA)}^{(m)})_j - \vec\beta_j} < \eps$.

It is interesting that while each $\vec{\mbf\alpha_\star}^{(m)}$ gives the optimal reconstruction of  $m$ snapshots (i.e., it is an efficient estimator in statistical language), these weights are not asymptotically correct, whereas the GLA weights are sub-optimal for any finite $m$, but are asymptotically correct.

\begin{table}[h]
\begin{center}
\begin{tabular}{|c|c|c|}
\hline
Estimator									& Consistent/Asymptotically Correct 	& Efficient/Optimal \\ \hline
$\vec{\mbf \alpha}_\star^{(m)}$		& no 					& yes \\
$\vec{\mbf \alpha}_{(GLA)}^{(m)}$	& yes 					& no \\ \hline
\end{tabular}
\end{center}
\caption{Comparison of estimators $\vec{\mbf \alpha}_\star^{(m)}$ and $\vec{\mbf \alpha}_{(GLA)}^{(m)}$. Efficiency is with respect to the loss function \eqref{eq:reconstruction-loss-function}. Consistency/Asymptotic correctness is with respect to whether they converge to the correct eigenvector coefficients.}
\label{table:reconstruction-weights-efficiency-consistency}
\end{table}%

\begin{remark}
The reason that GLA weights were asymptotically correct is due to the fact eigenvalues $\lambda_1, \lambda_2$ associated with the reconstruction vectors $\mbf z_1, \mbf z_2$ dominated the eigenvalue of the unresolved eigenvector. This lead the signal $s_j(i)$ to converge exponentially fast to the correct weight $\vec\beta_j$. This result will continue to hold if $\abs{\lambda_3} = \abs{\lambda_2}$. However, the result will be \emph{false}, if any eigenvalue of an unresolved eigenvector has a larger modulus than the reconstruction eigenvalues; i.e., if $\abs{\lambda_2} > \abs{\lambda_j}$ for $j=1$ or 2.
\end{remark}

\subsubsection{Eigenvector-adapted hermitian forms and reflexive g-inverses.}
The weight matrix $M \in \C^{m\ell\times m\ell}$ in Proposition \ref{zd:PROP:M-LS} and the minimizer $\vec{\mbf\alpha}_\star$ can be recovered from a hermitian form on $\C^n$ which is adapted to the eigenvector basis. This gives a connection with the abstract GLA theorem \cite{Mohr:2014wm} in which appropriate spaces of observables for the Koopman operator were constructed by adapting the norm in order to orthogonalize the principal eigenfunctions and their products. 

Let $Z = [\mbf z_1, \dots, \mbf z_n]$ be the eigenvectors of $\A$. These are not necessarily orthogonal with respect to the standard inner product, $\inner{\cdot}{\cdot}_{\C^n}$. We can construct a weighted hermitian form in which the first $\ell$ eigenvectors are orthogonal. Let $Z_\ell = \begin{bmatrix}Q_\ell & Q_\ell^\perp\end{bmatrix}\begin{pmatrix} R \\ \mbf 0 \end{pmatrix}$ be the QR decomposition of $Z_\ell$, where $\Ran Z_\ell = \Ran Q_\ell$, $R \in \C^{\ell\times\ell}$, and $Q = \begin{bmatrix}Q_\ell & Q_\ell^\perp\end{bmatrix}$ is unitary. Define the matrix $P \in \C^{n\times n}$ as
	\begin{align}\label{eq:Cn-weight-matrix}
	P &= Q_\ell R^{-*}R^{-1} Q_\ell^* + Q_\ell^\perp Q_\ell^{\perp *}
	\end{align}
and define the bilinear form $\inner{\cdot}{\cdot}_P : \C^n \times \C^n \to \C$ as
	\begin{equation}
	\inner{\mbf x}{\mbf y}_P = \mbf y^* P \mbf x.
	\end{equation}

\begin{proposition}\label{prop:p-form-properties}
Let $1\leq i,j \leq \ell$ and $\mbf y\in \Ran{Q_\ell^\perp}$. Then
\begin{compactenum}[(a)]
\item $\inner{\mbf z_i}{\mbf z_j}_P = \delta_{i,j}$ and
\item $\inner{\mbf z_i}{\mbf y}_P = 0$.
\end{compactenum}
\end{proposition}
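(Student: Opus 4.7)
The plan is to reduce both claims to elementary block calculations once the eigenvectors $\mbf z_i$ are resolved with respect to the unitary $Q = [Q_\ell\ Q_\ell^\perp]$. First I would rewrite $Z_\ell = Q_\ell R$ as a trivial consequence of the given QR decomposition, and read off the two facts that will drive everything else: $Q_\ell^* \mbf z_j = R \mbf e_j$ (because $Q_\ell^*Q_\ell = I_\ell$) and $Q_\ell^{\perp *} \mbf z_j = \mbf 0$ (because $\Ran Z_\ell = \Ran Q_\ell \perp \Ran Q_\ell^\perp$). In other words, in the $Q$-basis, each $\mbf z_j$ has coordinates $\begin{pmatrix} R\mbf e_j \\ \mbf 0\end{pmatrix}$.

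Next I would compute $P\mbf z_i$ by applying the definition \eqref{eq:Cn-weight-matrix} termwise: the second summand $Q_\ell^\perp Q_\ell^{\perp *}\mbf z_i$ vanishes by the orthogonality observation above, while the first summand collapses via $R^{-*}R^{-1}(R\mbf e_i) = R^{-*}\mbf e_i$, leaving $P\mbf z_i = Q_\ell R^{-*}\mbf e_i$. For part (a), pairing on the left with $\mbf z_j^*$ and using $\mbf z_j^* Q_\ell = (Q_\ell^*\mbf z_j)^* = (R\mbf e_j)^* = \mbf e_j^* R^*$ yields
\begin{equation*}
\inner{\mbf z_i}{\mbf z_j}_P = \mbf e_j^* R^* R^{-*} \mbf e_i = \mbf e_j^*\mbf e_i = \delta_{i,j}.
\end{equation*}

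For part (b), write $\mbf y = Q_\ell^\perp \mbf u$ for some $\mbf u \in \C^{n-\ell}$, so that $Q_\ell^*\mbf y = \mbf 0$. Using the expression $P\mbf z_i = Q_\ell R^{-*}\mbf e_i$ just obtained,
\begin{equation*}
\inner{\mbf z_i}{\mbf y}_P = \mbf y^* Q_\ell R^{-*}\mbf e_i = (Q_\ell^*\mbf y)^* R^{-*}\mbf e_i = 0,
\end{equation*}
which completes the argument. There is no real obstacle here: the only thing to be careful about is keeping conjugate transposes and the direction of the hermitian form $\mbf y^*P\mbf x$ consistent, since the weight matrix $P$ is self-adjoint but the formula mixes $R^{-1}$ and $R^{-*}$; once the coordinates of $\mbf z_j$ in the $Q$-basis are extracted, the rest is a one-line cancellation.
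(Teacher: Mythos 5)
Your proof is correct and follows essentially the same route as the paper's: both arguments reduce everything to the identities $Q_\ell^*\mbf z_j = R\vec{\mbf e}_j$ and $Q_\ell^{\perp *}\mbf z_j = \mbf 0$ coming from the QR decomposition, then cancel $R^{-*}R^{-1}$ against $R$ on each side. Your organization (computing $P\mbf z_i = Q_\ell R^{-*}\vec{\mbf e}_i$ once and reusing it for both parts) is a minor streamlining, not a different method.
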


\begin{proof}
For $i \leq \ell$, $\mbf z_i = Z_\ell \vec{\mbf e}_i = Q_\ell R \vec{\mbf e}_i$. Then 
	\begin{align}
	\inner{\mbf z_i}{\mbf z_j}_P = \mbf z_j^* P \mbf z_i = \mbf z_j^* Q_\ell R^{-*}R^{-1}Q_\ell^* \mbf z_i = (R^{-1}Q_\ell^* \mbf z_j)^*(R^{-1}Q_\ell^* \mbf z_i) = (\vec{\mbf e}_j)^*\vec{\mbf e}_i = \delta_{i,j}.
	\end{align}
Write $\mbf y = Q_\ell^\perp \vec c$. Then since $Q_\ell^{\perp *} Q_\ell = \mbf 0$, 
	\begin{align}
	\inner{\mbf z_i}{\mbf y}_P = \mbf y^* P \mbf z_i = (\vec c)^* Q_\ell^{\perp *}(Q_\ell R^{-*}R^{-1} Q_\ell^* + Q_\ell^\perp Q_\ell^{\perp *}) Q_\ell R \vec{\mbf e}_i = 0.
	\end{align}
\end{proof}
\noindent With this inner product, for each $i \leq \ell$, $\Pi_{\mbf z_i}(\cdot) = \inner{\cdot}{\mbf z_i}_P \mbf z_i$ is the \emph{orthogonal} projection onto $\linspan\set{\mbf z_i}$. It is easy to see that $\inner{\cdot}{\cdot}_P$ is a positive, semidefinite hermitian form. It is also nondegenerate; for any $\mbf y \in \C^n$, if $\inner{\mbf y}{\mbf x}_P = 0$ for all $\mbf x$, then $\mbf y=0$. Therefore the hermitian form generates a norm $\norm{\cdot}_P = \sqrt{\inner{\cdot}{\cdot}_P}$. 

\begin{remark}
When the first $\ell$ eigenfunctions are orthonormal, the $P$-norm reduces to the canonical norm on the space. Indeed, if $Z_{\ell} = [\mbf z_1,\dots,\mbf z_\ell]$ are orthonormal, then the QR-decomposition satisfies $Z_\ell = Q \begin{pmatrix} I_\ell \\ \mbf 0 \end{pmatrix}$ where $Q = [Z_\ell ~ Z_{\ell}^{\perp}]$ is unitary. Then for any $\mbf x \in \C^n$
	\begin{align*}
	\norm{x}_{P}^2 &= \inner{\mbf x}{\mbf x}_P 
	= \mbf x^*( Z_\ell I_\ell^{-*}I_\ell^{-1} Z_\ell^* + Z_\ell^\perp Z_\ell^{\perp *})\mbf x \\
	&= \mbf x^*( Z_\ell Z_\ell^* + Z_\ell^\perp Z_\ell^{\perp *})\mbf x 
	= \mbf x^*( Q Q^* )\mbf x 
	= \norm{Q^* x}^2.
	\end{align*}
Since $Q^*$ is unitary, $\norm{Q^* x}^2 = \norm{x}^2$ which gives the result $\norm{x}_{P} = \norm{x}$.
\end{remark}
	
We reformulate \eqref{eq:reconstruction-loss-function} using this $P$-norm.
	\begin{equation}\label{eq:reconstruction-loss-function-P-norm}
	\vec{\mbf\alpha}_P = \argmin_{\vec{\mbf\beta} \in \C^{\ell\times 1}} \sum_{i=1}^{m} \norm{\mbf f_i - \sum_{j=1}^{\ell} \mbf z_j \lambda_j^{i-1} \vec{\mbf\beta}_j }_P^2
	\end{equation}
We write each $\f_i$ as 
	\begin{equation}
	\f_i = \sum_{j=1}^{\ell} \inner{\f_i}{\mbf z_j}_P \mbf z_j + \mbf h_i
	\end{equation}
where $\mbf h_i \perp_P \linspan\set{\mbf z_j \given j=1,\dots, \ell}$. Using Proposition\ \ref{prop:p-form-properties}, 
	\begin{align}
	\sum_{i=1}^{m} \norm{\mbf f_i - \sum_{j=1}^{\ell} \mbf z_j \lambda_j^{i-1} \vec{\mbf\beta}_j }_P^2
	&= \sum_{i=1}^{m} \norm{\mbf h_i}_P^2 +  \norm{\sum_{j=1}^{\ell} \inner{\f_i}{\mbf z_j}_P \mbf z_j - \sum_{j=1}^{\ell} \mbf z_j \lambda_j^{i-1} \vec{\mbf\beta}_j }_P^2 \\
	&= \sum_{i=1}^{m} \norm{\mbf h_i}_P^2 +  \sum_{j=1}^{\ell} \abs{ \inner{\f_i}{\mbf z_j}_P -\lambda_j^{i-1} \vec{\mbf\beta}_j }^2 \\
	&= \sum_{i=1}^{m} \norm{\mbf h_i}_P^2 +  \norm*{ \begin{pmatrix}\inner{\f_i}{\mbf z_1}_P \\ \vdots \\ \inner{\f_i}{\mbf z_\ell}_P \end{pmatrix}  - \begin{pmatrix} \lambda_1^{i-1} & &  \\ &  \ddots & \\ & & \lambda_\ell^{i-1} \end{pmatrix} \vec{\mbf\beta} }_{\C^{\ell}}^2
	\end{align}
Using definition \eqref{eq:Cn-weight-matrix} of $P$ and noting that $R^{-1}Q_\ell^*\mbf z_j = \vec{\mbf e}_j$ and $\mbf z_j^* Q_\ell^\perp = (Q_\ell^{\perp *} \mbf z_j)^* = 0$, we have
$$
	\inner{\f_i}{\mbf z_j}_P 
	= (\mbf z_j)^* (Q_\ell R^{-*}R^{-1} Q_\ell^* + Q_\ell^\perp Q_\ell^{\perp *}) \f_i 
	= (R^{-1}Q_\ell^*\mbf z_j)^* (R^{-1}Q_\ell^* \f_i) 
	= (R^{-1}Q_\ell^* \f_i)_j
	$$
and therefore
	\begin{align}
	\sum_{i=1}^{m} \norm{\mbf f_i - \sum_{j=1}^{\ell} \mbf z_j \lambda_j^{i-1} \vec{\mbf\beta}_j }_P^2
	&= \sum_{i=1}^{m} \norm{\mbf h_i}_P^2 +  \norm*{ R^{-1} Q_\ell^* \f_i  - \Delta_{\Lambda_i} \vec{\mbf\beta} }_{\C^{\ell}}^2 \\
	&= \norm*{ (I_m \otimes R^{-1} Q_\ell^*) \begin{pmatrix}\f_1 \\ \vdots \\ \f_m \end{pmatrix}  -  \begin{pmatrix}\Delta_{\Lambda_1} \\ \vdots \\ \Delta_{\Lambda_m} \end{pmatrix} \vec{\mbf\beta} }_{\C^{m\ell}}^2 +  \sum_{i=1}^{m} \norm{\mbf h_i}_P^2 .
	\end{align}
Defining 
	\begin{equation}
	T = \begin{pmatrix}\Delta_{\Lambda_1} \\ \vdots \\ \Delta_{\Lambda_m} \end{pmatrix},
\;\;\mbox{then}\;\; 
	T^{\dag} = (T^* T)^{-1} T^* = ( \sum_{k=1}^{m} \Delta_{\Lambda_i}^*\Delta_{\Lambda_i})^{-1} \begin{bmatrix}\Delta_{\Lambda_1}^* & \cdots & \Delta_{\Lambda_\ell}^*  \end{bmatrix}.
	\end{equation}
The coefficient vector that solves \eqref{eq:reconstruction-loss-function-P-norm} is
	\begin{equation}
	\vec{\mbf\alpha}_P = T^{\dag}(I_m\otimes R^{-1}Q_{\ell}^*) \begin{pmatrix}\f_1 \\ \vdots \\ \f_m \end{pmatrix}
	\end{equation}
Noting that $Q_\ell^*\f_i = \mbf g_i$ and expanding the above formula for $T^\dag$, we recover formula \eqref{eq:optimal-reconstruction-weights-alpha-star} for the optimal reconstruction weights; i.e., $\vec{\mbf\alpha}_P = \vec{\mbf\alpha}_\star$.

\section{Conclusions}

We have presented a new variant of the Dynamic Mode Decomposition that follows the natural formulation in terms of Krylov bases. Using high accuracy numerical linear algebra techniques we were able to curb the ill-conditioning of the companion matrix's associated Vandermonde matrix allowing us to invert it and find the DMD modes. In addition to the inherent elegance in terms of the companion matrix formulation of DMD, our methods have a close connection to Koopman operator theory, explicitly comparing our methods with the result coming from Generalized Laplace Analysis theory. Furthermore, our methods can be incorporated in a meta-algorithm which reconstructs data snapshots. There exists other formulas for the optimal reconstruction of the snapshots from the DMD modes. Within these algorithms, there is a hierarchy of methods which trade accuracy for faster speed/lower complexity. Our methods can be regarded as the last line of defense; one requires a very accurate result despite very poor condition numbers. We take up this line of enquiry in a companion paper to this one.

\bibliography{SOURCES/Reconstruction_references}
\bibliographystyle{plain}

\section{Appendix: Matlab codes}\label{S=Matlab-codes}

\subsection{Vand\_DFT\_LDU}
\vspace{-2mm}
\begin{algorithm}[H]
	\caption{Matlab implementation of (\ref{zd:eq:Cauchy-LDU})}
	\label{zd:Alg:VAND-FFT-LDU}
	\vspace{-4mm}
	\begin{lstlisting}
	function [XL, D, YU, P1, P2 ] = Vand_DFT_LDU( x, my, LDUorXDY )
	% Vand_FFT_LDU computes entry-wise forward stable LDU decomposition of the matrix
	% G=V(x)*DFT, where V(x)=fliplr(vander(x)) is the Vandermonde matrix defined by
	% the real or complex vector x, and DFT is the Discrete Fourier Transform. V(x) is in
	% general rectangular with [length(x)] rows and [my] columns, V(x)_{ij} = x(i)^(j-1). 
	% The LDU is computed with full pivoting. The code uses explicit formulas for the 
	% Schur complement update. It is written for clarity, and not for optimality.
	% On input:
	% x        :: real or complex vector that defines the Vandermonde matrix V(x).
	% my       :: number of columns of V(x) and the dimension of the DFT matrix
	% LDUorXDY :: job descrption; defines the factors on the output
	%             If 'LDU' , then G(P1,P2) = XL * diag(D) * YU
	%             If 'LU'  , then G(P1,P2) = XL * YU
	%             If 'XDYT', then G = XL * diag(D) * YU'
	%             If 'XYT' , then G = XL * YU' 
	% On exit:
	% XL, YU, D :: The computed factors. XL and YU are matrices and D is column vector
	%              that defines diagonal matrix diag(D). See the descripton of LDUorXDY.                 
	% P1, P2    :: permutations used in the pivoted LDU. See the descripton of LDUorXDY.
	%
	% Coded by Zlatko Drmac, drmac@math.hr.
	%
	mx = max(size(x)) ; y = (exp(-2*pi*1i/my).^(0:my-1)).'  ;
	G  = zeros(mx,my) ; s = 1/sqrt(my) ; tol = sqrt(my)*eps ; 
	for r = 1 : mx, for c = 1 : my
	if ( abs( x(r) - y(c) ) > tol )
	G(r,c) = (s*(x(r)^my - 1)*y(c)) / (x(r)-y(c)) ;
	else G(r,c) = prod(x(r)-y(1:c-1))*prod(x(r)-y(c+1:my)) * y(c) * s ; end
	end; end
	P1 = 1:mx ; P2 = 1:my ; 
	for k = 1 : min(mx,my)   
	[ colmax, jcm] = max( abs( G(k:mx,k:my) ), [] , 1 )        ; 
	[ ~, jm ] = max( colmax ) ; im = jcm(jm)+k-1 ; jm = jm+k-1 ;     
	if ( k ~= im )
	itmp = P1(k)  ; P1(k)  = P1(im)  ; P1(im)  = itmp ;
	tmp  = x(k)   ; x(k)   = x(im)   ; x(im)   = tmp  ;
	vtmp = G(k,:) ; G(k,:) = G(im,:) ; G(im,:) = vtmp ; end
	if ( k~= jm )
	itmp = P2(k)  ; P2(k)  = P2(jm)  ; P2(jm)  = itmp ;
	tmp  = y(k)   ; y(k)   = y(jm)   ; y(jm)   = tmp  ; 
	vtmp = G(:,k) ; G(:,k) = G(:,jm) ; G(:,jm) = vtmp ; end   
	for r = k + 1 : mx, for c = k + 1 : my
	if ( G(r,c) ~= 0 )
	G(r,c) = G(r,c) * (x(r)-x(k))*(y(c)-y(k)) / ((y(c)-x(k))*(x(r)-y(k))) ;
	else G(r,c) = -G(r,k)*G(k,c) / G(k,k) ; end
	end; end    
	end
	D  = diag(G) ; XL = tril(G(1:mx,1:min(mx,my)),-1)*diag(1./D) + eye(mx,min(mx,my)) ;
	if ( strcmp( LDUorXDY, 'LDU') || strcmp( LDUorXDY, 'XDYT') ) 
	YU = diag(1./D)*triu(G(1:mx,1:my),1) + eye(min(mx,my),my) ;
	else YU = triu(G(1:mx,1:my),1) + diag(D)*eye(min(mx,my),my)    ; end
	if ( strcmp( LDUorXDY, 'XDYT' ) || strcmp( LDUorXDY, 'XYT' ) )
	rowpinv(P1) = 1:mx;XL = XL(rowpinv,:); colpinv(P2) = 1:my;YU = YU(:,colpinv)'; end
	end
	\end{lstlisting}
\end{algorithm}

\subsection{X\_inv\_Vandermonde}

\begin{algorithm}[H]
	\caption{Matlab implementation of the formula (\ref{zd:eq:W-via-DFT})}
	\label{zd:Alg:inv(V)-via-DFT}
	\begin{lstlisting}
	function Y = X_inv_Vandermonde( z, X  ) 
	% X_inv_Vandermonde computes Y = X*inv(V(z)), where X has m columns and 
	% V(z)=fliplr(vander(z)) is the m x m Vandermonde matrix defined by the 
	% m x 1 vector z; V(z)_{ij} = z(i)^(j-1), i,j=1,...,m.
	%.......................................................................... 
	% Coded by Zlatko Drmac, drmac@math.hr.
	%..........................................................................
	%
	m = length(z) ; 
	[ L, D, U, p1, p2 ] = Vand_DFT_LDU( z, m, 'LDU' ) ;
	Y = ifft(X,[],2)  ;  
	Y = ( ( Y(:,p2) / U ) * diag(sqrt(m)./D) ) / L ; 
	p1i(p1) = 1:m ; Y = Y(:,p1i) ; % p1i is the inverse of the permutation p1
	end
	\end{lstlisting}
\end{algorithm}

\end{document}